\tikzset{ext/.style={circle, draw,inner sep=1pt, minimum size=5pt},
         int/.style={circle,draw,fill,inner sep=1pt, minimum size=5pt},nil/.style={inner sep=1pt}}
\tikzset{exte/.style={circle, draw,inner sep=3pt},inte/.style={circle,draw,fill,inner sep=3pt}}
\tikzset{diagram/.style={matrix of math nodes, row sep=3em, column sep=2.5em, text height=1.5ex, text depth=0.25ex}}
\tikzset{diagram2/.style={matrix of math nodes, row sep=0.5em, column sep=0.5em, text height=1.5ex, text depth=0.25ex}}
\theoremstyle{plain}
  \newtheorem{thm}{Theorem}%[section]
  \newtheorem{defi}[thm]{Definition}
  \newtheorem{prop}[thm]{Proposition}
\theoremstyle{definition}
  \newtheorem{rem}[thm]{Remark}
\newcommand{\R}{{\mathbb{R}}}
\newcommand{\Graphs}{{\mathsf{Graphs}}}
\newcommand{\fGraphs}{{\mathsf{fGraphs}}}
\newcommand{\SGraphs}{{\mathsf{SGraphs}}}
\newcommand{\fdGraphs}{{\mathsf{fdGraphs}}}
\newcommand{\Gra}{{\mathsf{Gra}}}
\newcommand{\dGra}{{\mathsf{dGra}}}
\newcommand{\KGra}{{\mathsf{Gra}}^1}
\newcommand{\KGraphs}{{\mathsf{Graphs}}^1}
\newcommand{\Poiss}{{\mathsf{Pois}}}
\newcommand{\Pois}{\Poiss}
\newcommand{\PA}{{\mathit{PA}}}
\newcommand{\gra}{{\mathrm{gra}}}
\newcommand{\kgra}{{\mathrm{kgra}}}
\newcommand{\SGra}{{\mathsf{SGra}}}
\newcommand{\K}{\mathbb{R}}
\newcommand{\op}{\mathcal}
\newcommand{\Br}{\mathsf{Br}}
\newcommand{\Lie}{\mathsf{Lie}}
\newcommand{\ELie}{\mathsf{ELie}}
\newcommand{\hoLie}{\mathsf{hoLie}}
\newcommand{\fSGraphs}{\mathsf{fSGraphs}}
\newcommand{\SC}{\mathsf{SC}}
\newcommand{\Ass}{\mathsf{Assoc}}
\newcommand{\bigGraphs}{\mathsf{bigGraphs}}
\newcommand{\FM}{\mathsf{FM}}
\newcommand{\uFM}{\mathsf{uFM}}
\newcommand{\uSFM}{\mathsf{uSFM}}
\newcommand{\uFMH}{\uFM^1}
\newcommand{\bpm}{\begin{pmatrix}}
\newcommand{\epm}{\end{pmatrix}}
\newcommand{\Tpoly}{T_{\rm poly}}
\newcommand{\Dpoly}{D_{\rm poly}}
\newcommand{\hoPoiss}{\mathsf{hoPois}}
\newcommand{\hoPois}{\mathsf{hoPois}}
\newcommand{\mU}{\mathcal{U}}
\newcommand{\LD}{\mathsf{D}}
\newcommand{\lD}{\LD}
\newcommand{\FMH}{\mathsf{FM}^1}
\newcommand{\SFM}{\mathsf{SFM}}
\newcommand{\e}{\mathsf{e}}
\newcommand{\KGraphsM}{\mathsf{KGraphsM}}
\newcommand{\uGraphs}{\mathsf{uGraphs}}
\newcommand{\uKGraphs}{\uGraphs^1}
\newcommand{\uSGraphs}{\mathsf{uSGraphs}}
\newcommand{\bo}{\mathbf{1}}
\newcommand{\EGer}{\mathsf{EGer}}
\newcommand{\aor}{\rcirclearrowleft}
\newcommand{\aol}{\lcirclearrowright}
\newcommand{\Conf}{\mathrm{Conf}}
\newcommand{\fKGraphs}{\fGraphs^1}
\begin{document}
\title{Models for the $n$-Swiss Cheese operads}
%\title{A Note on $\Br_\infty$- and $\KS_\infty$-formality}
\author{Thomas Willwacher}
%\address{Department of Mathematics\\ Harvard University\\ One Oxford Street \\ Cambridge, MA, USA}
%\email{t.willwacher@gmail.com}
\address{Institute of Mathematics\\ University of Zurich\\ Winterthurerstrasse 190 \\ 8057 Zurich, Switzerland}
\email{thomas.willwacher@math.uzh.ch}

\thanks{The author was partially supported by the Swiss National Science foundation, grant 200021-150012, and the SwissMAP NCCR funded by the Swiss National Science foundation.}

%\thanks{The author was partially supported by the Swiss National Science Foundation (grants 200020-105450 and 200021\_150012). }
% \subjclass[2000]{16E45; 53D55; 53C15; 18G55}
% \date{}
\keywords{Formality, Deformation Quantization, Operads}

\begin{abstract}
We describe combinatorial Hopf (co-)operadic models for the Swiss Cheese operads built from Feynman diagrams. This extends previous work of Kontsevich and Lambrechts-Voli\'c for the little disks operads.
\end{abstract}
\maketitle

\section{Introduction}
The little $n$-disks operads $\LD_n$ are operads of rectilinear embeddings of ``small'' $n$-dimensional disks in the $n$-dimensional unit disk. The operadic composition is obtained by gluing one such configuration into one of the small disks, cf. Figure \ref{fig:LDcomposition} below.
Similarly, one may consider spaces $\LD^1_n$ of rectilinear embedings of ``small'' disks and half-disks in the $n$-dimensional unit half-disk, such that the equators of the small half disks sit in the equatorial plane of the unit half-disk.
\[
\begin{tikzpicture}[scale=.7]
 \clip (-5,-.008) rectangle (5,5);
 \draw (0,0) circle (4);
 \node[draw, circle] at (-1,1.7) {1};
 \node[draw, circle, minimum width=10pt] at (2,2.1) {2};
 \draw (0,0) circle (4);
 \draw(-4,0) -- (4,0);
 \draw (-3,0) circle (.5);
 \node at (-3,0.2) {$\scriptstyle 1$};
 \draw (-.55,0) circle (1);
 \node at (-.55,0.5) {$3$};
 \draw (2,0) circle (.75);
 \node at (2,0.3) {$2$};
\end{tikzpicture}
\]
By gluing such configurations, one can can naturally define a two-colored operad structure on the pair $\SC_n=(\LD_n,\LD_n^1)$, which is called the $n$-dimensional Swiss Cheese operad, cf. \cite{voronov_swisscheese}.
The little disks and Swiss-Cheese operads play a central role in several disciplines of mathematics. In particular, understanding their rational homotopy theory is important for computing the factorization homology of manifolds with boundary \cite{AyalaFrancis}, and in the embedding calculus for such manifolds \cite{GoodwillieWeiss}.

The goal of this short note is to describe real combinatorial (cofibrant) dg Hopf cooperad models for the topological colored operads $\SC_n$. We expect that the result will have applications in the aforementioned areas of mathematics.

In fact, we will work with versions of the Swiss Cheese operads built from configuration spaces of points rather than configurations of disks as in \cite{voronov_swisscheese}.
Concretely, the topological operad $\FM_n$ is defined as a compactification of the space of configurations of finite sets of points in $\R^n$, modulo scaling and translation. It is homotopic to $\LD_n$, see \cite{K2}. Similarly, the (homotopic) variant of $\LD_n^1$ are configurations of points in the upper halfplane and on the real axis, modulo scaling and translation, suitably compactified so that the operadic compositions are defined \cite{voronov_swisscheese}. We denote these configuration spaces collectively by $\FM_n^1$, and the two-colored topological operad formed by 
\[
 \SFM_n=(\FM_n,\FM_n^1).
\]
In fact, $\SFM_n$ is an operad of semi-algebraic manifolds, and in each fixed arity we may consider the dg algebras of PA forms \cite{HLTV} on the underlying configuration spaces. We denote this collection by $\Omega_{\PA}(\SFM_n)$. 

The main result of this paper is to describe a combinatorial 2-colored differential graded Hopf\footnote{A Hopf cooperad is a cooperad in the category of differential graded commutative algebras, cf. \cite{lodayval}.} cooperad $\SGraphs_n=(\Graphs_n, \KGraphs_n)$ with a map (of colored collections) $\SGraphs_n\to \Omega_{\PA}(\SFM_n)$ that satisfies the following properties:
\begin{itemize}
 \item In each arity the map is a quasi-isomorphism of differential graded commutative algebras.
 \item The map is compatible with the (co)operadic (co)compositions in the sense that the following diagrams commute:
 \begin{equation}\label{equ:opcompat1}
  \begin{tikzcd}
   \Graphs_n(r) \ar{r} \ar{dd} & \Omega_{\PA}(\FM_n(r)) \ar{d} \\
   & \Omega_{\PA}(\FM_n(r-k+1)\times \FM_n(k)) \\
   \Graphs_n(r-k+1)\otimes \Graphs_n(k) \ar{r} & \Omega_{\PA}(\FM_n(r-k+1))\otimes \Omega_{\PA}(\FM_n(k)) \ar{u}
  \end{tikzcd}
 \end{equation}
 \begin{equation}\label{equ:opcompat2}
  \begin{tikzcd}
   \Graphs_n^1(r,s) \ar{r} \ar{dd} & \Omega_{\PA}(\FM_n^1(r,s)) \ar{d} \\
   & \Omega_{\PA}(\FM_n^1(r-k, s-l+1)\times \FM_n^1(k,l)) \\
   \Graphs_n^1(r-k,s-l+1)\otimes \Graphs_n^1(k,l) \ar{r} & \Omega_{\PA}(\FM_n^1(r-k, s-l+1))\otimes \Omega_{\PA}(\FM_n^1(k,l)) \ar{u}
  \end{tikzcd}
 \end{equation}
 \begin{equation}\label{equ:opcompat3}
  \begin{tikzcd}
   \Graphs_n^1(r,s) \ar{r} \ar{dd} & \Omega_{\PA}(\FM_n^1(r,s)) \ar{d} \\
   & \Omega_{\PA}(\FM_n^1(r-k+1,s)\times \FM_n(k)) \\
   \Graphs_n^1(r-k+1,s)\otimes \Graphs_n(k) \ar{r} & \Omega_{\PA}(\FM_n^1(r-k+1,s))\otimes \Omega_{\PA}(\FM_n(k)) \ar{u}
  \end{tikzcd}
 \end{equation}
 \item The graded commutative algebras underlying $\SGraphs$ are free.
\end{itemize}

Given the above properties, we consider $\SGraphs_n$ to be a dg commutative algebra model for the topological Swiss Cheese operads $\SFM_n$ and $\SC_n$.

In fact, our construction is merely an extension to the Swiss-Cheese setting of a similar construction of Kontsevich \cite{K2} for the little $n$-disks operads, which was implemented in detail and more rigor by Lambrechts and Voli\'c \cite{LV}.
Our contribution in this paper is to solve some technical issues which before have prevented this extension, and whose solution the author had missed in \cite{graphthings}.

To describe the model, let us quickly summarize Kontsevich's construction.\footnote{We present a slightly non-standard (and slightly too complicated) approach to Kontsevich's construction. This approach will however set the stage for the Swiss Cheese case later. }
For this one considers a cooperad $\fGraphs_n$ whose operations in arity $r$ are linear combinations of graphs with $r$ numbered (``external'') vertices and an arbitrary (finite) number of unidentifiable ``internal'' vertices like the following:
\begin{equation}\label{equ:graphsexample}
 \begin{tikzpicture}[baseline=2ex]
  \node[ext] (v1) at (0,0) {$\scriptstyle 1$};
  \node[ext] (v2) at (2,0) {$\scriptstyle 2$};
  \node[ext] (v3) at (0,1) {$\scriptstyle 3$};
  \node[ext] (v4) at (2,1) {$\scriptstyle 4$};
  \node[int] (w1) at (.66,.5) {};
  \node[int] (w2) at (1.33,.5) {};
  \node[int] (w3) at (3,.5) {};
  \node[int] (w4) at (4,.5) {};
  \node[int] (w5) at (3,1.5) {};
  \node[int] (w6) at (4,1.5) {};
  \draw (v1) edge (v2) edge (v3) edge (w1)
        (w1) edge (w2) edge (v3)
        (w2) edge (v2) edge (v4)
        (w3) edge (w4) edge (w5) edge (w6)
        (w4) edge (w5) edge (w6)
        (w5) edge (w6);
 \end{tikzpicture}
\end{equation}

For each such graph $\Gamma\in\fGraphs(r)$ (with, say, $k$ internal vertices) one may define a PA form $\omega_\Gamma\in \Omega_{\PA}(\FM_n(r))$.
This is done by a fiber integral
\begin{equation}\label{equ:integral1}
  \omega_\Gamma = \int_{\FM_n(k+r)\to \FM_n(r)} \bigwedge_{(i,j)\text{ edge}} \pi_{ij}^*\Omega_{S^{n-1}},
\end{equation}
where the integral is taken over the fiber of the forgetful map $\FM_n(k+r)\to \FM_n(r)$, and the integrand is obtained as a product of pullbacks of the round volume form on the sphere by the forgetful maps 
\[
 \pi_{ij} : \FM_n(k+r) \to \FM_n(2) =S^{n-1}
\]
forgetting all but two points.

The spaces $\fGraphs_n(r)$ naturally assemble into a dg Hopf cooperad $\fGraphs_n$, and it can be checked using Stokes' Theorem that the map $\fGraphs_n\to \Omega_{\PA}(\FM_n)$ is compatible with all algebraic operations, in particular the cooperad structure in the sense of \eqref{equ:opcompat1}.
However, the maps $\fGraphs_n(r)\to \Omega_{\PA}(\FM_n(r))$ are not quasi-isomorphisms (yet). 
To have a quasi-isomorphism, one has to pass to the connected quotient. We call a graph in $\fGraphs_n(r)$ \emph{externally disconnected} if it has a connected component containing only internal vertices but no external. For example, the graph in the above picture is externally disconnected.
One then has the following result.
\begin{thm}[Kontsevich {\cite{K2}}, Lambrechts-Voli\'c{\cite{LV}}] \label{thm:KLV}
 The space of externally disconnected $\fGraphs_n^{disc}$ is a bi-ideal, so that the quotient collection 
 \[
  \Graphs_{n} = \fGraphs_n / \fGraphs_n^{disc}
 \]
 inherits a dg Hopf cooperad structure.
 
 The map $\fGraphs_n\to \Omega_{\PA}(\FM_n)$ factors through $\Graphs_n$, and the map 
 \[
  \Graphs_n \to \Omega_{\PA}(\FM_n)
 \]
is a quasi-isomorphism compatible with all algebraic structures.
\end{thm}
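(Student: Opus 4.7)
The plan is to prove the theorem in three stages: show that $\fGraphs_n^{disc}$ is a differential bi-ideal, that the integration map vanishes on it, and that the induced quotient map is a quasi-isomorphism. The first stage is mostly formal. The commutative product on $\fGraphs_n$ glues two graphs by identifying their external vertices, so a purely internal connected component of either factor persists as a purely internal component of the product, yielding the ideal property. The cocomposition dual to the $\circ_i$ operations partitions the internal vertices between two factors and selects a subgraph to be collapsed; a connected component cannot be split nontrivially between factors, so a purely internal component sits entirely inside one factor and renders it externally disconnected, yielding the coideal property. The differential acts by adjoining an edge between two vertices, and I would verify that this operation preserves the externally disconnected subspace up to the usual sign.

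For the second stage, any externally disconnected graph decomposes as $\Gamma = \Gamma_{ext} \sqcup \Gamma_0$ with $\Gamma_0$ connected, purely internal, on $k_0 \geq 1$ vertices; using a local trivialization of the forgetful bundle, the fiber integral \eqref{equ:integral1} factors as $\omega_\Gamma = Z(\Gamma_0) \cdot \omega_{\Gamma_{ext}}$ with the scalar
\[
  Z(\Gamma_0) = \int_{\FM_n(k_0)} \bigwedge_{(i,j) \in E(\Gamma_0)} \pi_{ij}^* \Omega_{S^{n-1}} \in \R.
\]
I would then invoke Kontsevich's vanishing lemma: $Z(\Gamma_0) = 0$ for all connected purely internal $\Gamma_0$ with $k_0 \geq 2$ (the $k_0 = 1$ case is excluded by ruling out tadpoles from the definition of $\fGraphs_n$). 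The classical proof uses dimension counting together with the orientation-reversing antipodal involution on $\FM_n(k_0)$, under which the integrand is preserved but the induced orientation on the quotient by translations and scaling acquires a sign, forcing the integral to vanish.

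The third stage, the quasi-isomorphism, is the main technical obstacle. I would equip $\Graphs_n(r)$ with the filtration by the number of internal vertices and analyze the resulting spectral sequence. On the $E_0$ page the differential reduces to the contraction of edges internal to the internal-vertex subgraph; the crucial step is to construct an explicit contracting homotopy that iteratively prunes bivalent internal vertices, internal trees, and subgraphs attached to the external skeleton by a single edge, thereby reducing the complex quasi-isomorphically to the subcomplex of graphs with no internal vertices. This subcomplex is the classical algebra of Arnold relations, whose cohomology by F.~Cohen's theorem is the $n$-Poisson algebra, agreeing with $H^*(\FM_n(r))$. The single-edge graphs map to the generators $\pi_{ij}^* \Omega_{S^{n-1}}$ of $H^*(\FM_n(r))$, so the induced map is an isomorphism on cohomology. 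The combinatorial heart of the argument, and the hard part, is the construction and analysis of this homotopy: careful sign bookkeeping is required, and one must rule out ``non-removable'' configurations that would obstruct the reduction to Arnold graphs.
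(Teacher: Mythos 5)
First, a point of orientation: the paper does not actually prove Theorem \ref{thm:KLV} --- it is imported from \cite{K2,LV} --- so the fair comparison is with the closely analogous arguments the paper does supply for the Swiss--Cheese extension in Sections \ref{sec:ideal} and \ref{sec:cohom}. Your stages 1 and 2 follow that template, but two points need repair. The differential on $\fGraphs_n$ is not ``adjoining an edge between two vertices''; it contracts an edge incident to at least one internal vertex (dually, on chains, it splits an internal vertex). This matters: a differential that added edges between arbitrary pairs of vertices would \emph{not} preserve $\fGraphs_n^{disc}$, since it could join a purely internal component to an external vertex; the correct closure argument is simply that edge contraction (or vertex splitting) never merges distinct connected components. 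For the vanishing in stage 2, the antipodal-involution lemma is neither needed nor the right tool: after Fubini the inner integral runs over the positions of the $k_0$ points of $\Gamma_0$, an $nk_0$-dimensional fiber (a compactification of $\Conf_{k_0}(\R^n)$, not of $\FM_n(k_0)$), and the $\Gamma_0$-part of the integrand is basic under the $(n+1)$-dimensional group of scalings and translations, hence has no component in the top degree $nk_0$. This is exactly the argument the paper runs in Section \ref{sec:ideal}, and it covers $k_0=1$ as well, without any appeal to the absence of tadpoles.

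The genuine gap is stage 3. Every term of the differential on $\Graphs_n(r)$ removes exactly one internal vertex, so the filtration by the number of internal vertices has vanishing associated-graded differential; the $E_0$-differential you describe (``contraction of edges internal to the internal-vertex subgraph'') does not get separated from the internal--external contractions on any page of that spectral sequence, and the proposed reduction does not get off the ground as stated. One needs a different organizing device: Lambrechts--Voli\'c induct on the number of external vertices via an algebraic analogue of the Fadell--Neuwirth fibrations, while Kontsevich's sketch (and the paper's own Section \ref{sec:cohom}, via \cite[Proposition 74]{graphthings} and \cite[Appendix K]{grt}) isolates bivalent internal vertices by a further, finer filtration and contracts ``strings''. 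Beyond this, the contracting homotopy you defer to is precisely the nontrivial content of \cite{K2,LV}; as written, your stage 3 records what must be proved (reduction to the Arnold presentation and comparison with $H^*(\FM_n(r))$) rather than proving it.
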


In this paper, we extend the Kontsevich construction as follows. To model the spaces $\FM_n^1(r,s)$ of configurations of $r$ points in the halfspace and $s$ on the boundary, we consider the dg commutative algebra $\fGraphs_n^1(r,s)$, whose elements are linear combinations of directed graphs with two types of vertices (usually denoted by type I and type II), with $r$ type one vertices and $s$ type II vertices numbered (``external''), and the remainder of vertices unidentifiable (``internal''), as for example in the following picture.
\begin{equation}\label{equ:kgraphsexample}
  \begin{tikzpicture}
  \draw (-.5,-.5) -- (4.5,-.5);
  \node[ext] (v1) at (0,0) {$\scriptstyle 1$};
  \node[ext, fill=white] (v2) at (2,-.5) {$\scriptstyle 1$};
  \node[ext] (v3) at (0,1) {$\scriptstyle 2$};
  \node[ext] (v4) at (2,1) {$\scriptstyle 3$};
  \node[ext, fill=white] (v5) at (0,-.5) {$\scriptstyle 2$};
  \node[int] (w1) at (.66,.5) {};
  \node[int] (w2) at (1.33,.5) {};
  \node[int] (w3) at (3,.5) {};
  \node[int] (w4) at (4,.5) {};
  \node[int] (w5) at (3,1.5) {};
  \node[int] (w6) at (4,1.5) {};
  \node[int] (u1) at (1,-.5) {};
  \node[int] (u2) at (3,-.5) {};
  \draw[-latex] (v1) edge (v2) edge (v3) edge (w1)
        (w1) edge (w2) edge (v3)
        (w2) edge (v2) edge (v4)
        (w3) edge (w4) edge (w5) edge (w6)
        (w4) edge (w5) edge (w6)
        (w5) edge (w6)
        (v4) edge (u1) edge(u2)
        (w3) edge (u2);       
 \end{tikzpicture}
\end{equation}
For $n=2$ we require in addition that all type II vertices (internal and external) are equipped with a linear ordering, which we indicate by drawing these vertices on a line as above.

Now, to each such graph $\Gamma\in \fGraphs_n^1(r,s)$ (with say $k$ and $l$ internal vertices of types I and II) we may associate a differential form $\omega^1_\Gamma\in\Omega_{\PA}(\FM_n^1(r,s))$ by the following variant of \eqref{equ:integral1} above:
\begin{equation}\label{equ:integral2}
 \omega_\Gamma^1 = \int_{\FM_n^1(r+k,s+l)\to\FM_n^1(r,s)}
 \bigwedge_{(i,j)\text{ edge}}
 \pi_{ij}^*\Omega_{S^{n-1}}.
\end{equation}
Here the integral is again over the fiber of the forgetful map. The integrand is again a product of pullbacks of the round volume form on the sphere along the forgetful maps 
\[
 \pi_{ij}: \FM_n^1(r+k,s+l) \to 
 \begin{cases}
  \FM_n^1(2,0) &\text{for $i,j$ of type I} \\
  \FM_n^1(1,1) &\text{for $i$ of type I, $j$ of type II}
 \end{cases}
 \to S^{n-1},
\]
where the final projection is by projecting the $j$-th point into the unit sphere in the tangent space at the $i$-th point, along the hyperbolic geodesic, as indicated in the following picture (for 2 dimensions).
\begin{equation}\label{equ:hyperbolicanglepicture}
\begin{tikzpicture}[scale=.7]
\draw (-3,0)--(3,0);
\node[int, label=30:{$i$}] (v1) at (1,3) {};
\node[int, label=180:{$j$}] (v2) at (-2,1) {};
\draw[dashed] (v1)+(0,-.5) -- +(0,3);
\draw (v1)+(80:.8) arc (80:195:.8);
\clip (-5,-.5) rectangle (2.5,4);
\node [draw, dashed] at (.84,0) [circle through={(v1)}] {};
\draw[-triangle 60] (v1)--(v2);
\end{tikzpicture}
\end{equation}
Analogously to the little-disks setting, one may again endow the pair 
\[
\fSGraphs_n=(\fGraphs_n, \fGraphs_n^1) 
\]
with a two-colored dg Hopf operad structure, such that the maps
\[
 \fSGraphs_n\to \Omega(\SFM_n)
\]
are compatible with all algebraic structures.
Continuing in analogy with the little disks setting, we wish to (i) define a bi-ideal of ``externally disconnceted'' graphs $\fSGraphs_n^{disc}$ such that the map $\fSGraphs_n\to \Omega(\SFM_n)$ factors through the quotient, and such that (ii) this map on the quotient is a quasi-isomorphism.
We note that step (i) is not obvious if $n=2$, since the naive definition of having connected components with only internal vertices does not work. Computing the cohomology in step (ii) is a bit more complicated than in the little disk setting, owed in part to the fact that the Swiss-Cheese operads are not formal \cite{livernet}.
The contribution of this note is to solve the problems (i) and (ii) and show the following result.
\begin{thm}\label{thm:main}
 There is a natural Hopf cooperadic bi-ideal $\fSGraphs_n^{disc}\subset \fSGraphs_n$ such that the map $ \fSGraphs_n\to \Omega(\SFM_n)$ factors through the quotient dg Hopf cooperad
 \[
 \SGraphs_n =    \fSGraphs_n / \fSGraphs_n^{disc},
 \]
and such that the induced map $\SGraphs_n\to \Omega(\SFM_n)$ is a quasi-isomorphism compatible with the differential graded commutative algebra structures and the (co)operadic (co)compositions as in \eqref{equ:opcompat1}-\eqref{equ:opcompat3}.
\end{thm}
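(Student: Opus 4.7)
The plan follows the Kontsevich--Lambrechts-Voli\'c strategy leading to Theorem \ref{thm:KLV}, with two new ingredients addressing the issues (i) and (ii) flagged in the introduction. The first step is to identify the bi-ideal $\fSGraphs_n^{disc}$. For $\fGraphs_n$ the classical definition applies verbatim. For $\fGraphs_n^1$ with $n\geq 3$ the naive analogue works: declare disconnected any graph with a connected component consisting solely of internal vertices (of either type). For $n=2$ however, a purely internal connected component sitting on the boundary can support a nonzero fiber integral, so the definition must be refined. I would split the ``bad'' components into a type~I part (a connected subgraph of internal type~I vertices with no edges to external vertices) and a type~II part (detected relative to the linear ordering of boundary vertices, ignoring bulk connections), and take $\fSGraphs_n^{disc}$ to be the linear span of graphs with at least one such component.

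The second step is to check that this subspace is in fact a Hopf cooperadic bi-ideal compatible with the integration map. Stability under the commutative product is clear since that product is given by vertex-superposition on externals. Stability under the three cocompositions \eqref{equ:opcompat1}--\eqref{equ:opcompat3} is combinatorial: each splits a graph along a subset of external vertices, and a bad component lies entirely on one side. Stability under the differential is the main technical check, and I expect it to follow by the standard Kontsevich argument pairing up edge-contractions within and across bad components. The factorization $\omega_\Gamma^1=0$ for $\Gamma\in\fSGraphs_n^{disc}$ follows from a vanishing lemma: the fiber integral \eqref{equ:integral2} factorizes along internal connected components, and Kontsevich's $\mathrm{SO}(n)$-symmetry argument handles bulk components; for the new type~II boundary components in $n=2$ the analogous vanishing comes from a Kontsevich-style symmetry argument adapted to the hyperbolic-angle form and the linear ordering of the type~II vertices.

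The main obstacle is the third step, proving the induced map $\SGraphs_n\to\Omega_{\PA}(\SFM_n)$ is a quasi-isomorphism. My approach is to filter $\SGraphs_n^1(r,s)$ by the number of internal vertices and compare with a matching filtration on $\Omega_{\PA}(\FM_n^1(r,s))$ coming from the Fulton--MacPherson boundary stratification. On the graph side the associated graded is a Koszul-type complex whose cohomology should match $H^\bullet(\FM_n^1(r,s))$ as computed by Cohen, Voronov and others. The essential subtlety is the non-formality of $\SC_n$ from \cite{livernet}: the differential on $\SGraphs_n^1$ is nonzero even on the associated graded, so the quasi-isomorphism is really a statement about the convergence of the two spectral sequences to the same limit. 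Carrying this out will require an acyclicity argument for auxiliary subcomplexes of boundary-internal-vertex graphs, where the $n=2$ linear ordering introduces extra signs that need to be tracked carefully; this is where I expect the bulk of the technical work to sit.
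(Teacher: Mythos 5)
Your $n\geq 3$ treatment matches the paper, but there is a genuine gap at the central point of the paper, namely the $n=2$ case of the ideal. The failure of the naive definition there is quantitative, not combinatorial: the graph $\Gamma$ consisting of two internal type II vertices on the boundary line and no edges has weight $c_\Gamma=\int_{\FMH_2(0,2)}1=1\neq 0$, so the piece of the twisted differential that contracts a purely internal subgraph $\Gamma_1$ with coefficient $c_{\Gamma_1}$ carries a (naively) externally disconnected graph to a connected one. Your proposed remedy --- detecting boundary components ``relative to the linear ordering, ignoring bulk connections'' and then invoking ``a Kontsevich-style symmetry argument adapted to the hyperbolic-angle form'' --- cannot work, because these boundary weights genuinely do not vanish on the plain graph basis; no symmetry argument will make $1$ equal to $0$. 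The missing idea is a change of basis: the paper passes to \emph{Lie decorated graphs}, in which the internal type II vertices are organized into products of formal Lie words and converted to linear combinations of ordinary graphs via the Poincar\'e--Birkhoff--Witt map (with type II vertices treated as odd). One then declares a Lie decorated graph externally disconnected if it has a purely internal component, counting type II vertices lying in a common Lie word as connected. In this basis the relevant weights are antisymmetrized combinations (for the two-point example, $1-1=0$), and the vanishing of $c_\Gamma$ on externally disconnected Lie decorated graphs follows from Fubini together with basicness under scaling and translation --- the same mechanism as for $n\geq 3$. Without this (or an equivalent) device your subspace is not closed under the differential and the quotient cooperad does not exist.

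On the quasi-isomorphism, your route also diverges from the paper's in a way that adds difficulty: the paper does not construct a matching filtration on $\Omega_{\PA}(\FM_n^1(r,s))$ from the boundary stratification (which would be delicate for PA forms). Instead it computes $H(\SGraphs_n^1(r,s))$ purely on the graph side --- for $n=2$ by filtering on internal type I vertices, with a Hochschild-type first differential whose cohomology is spanned by Lie decorated graphs with univalent internal type II vertices in singleton Lie words; for $n\geq 3$ by filtering on internal vertices and reducing, via an analysis of bivalent vertices and length-two zigzags, to $\Graphs_n(r)\otimes\Graphs_{n-1}(s)$ --- then identifies the answer with the known cohomology of $\FM_n^1(r,s)$ and observes that the spectral sequences degenerate because all surviving classes are represented by graphs without internal vertices. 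The non-formality of $\SC_n$ is not an obstruction here; it is reflected in the differential but does not prevent the arity-wise cohomology identification.
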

Furthermore, one can easily check that all arity components of $\SGraphs_n$ are free as graded commutative algebras, and thus $\SGraphs_n$ forms the desired dgca model (in the sense described above) for the Swiss-Cheese operad $\SC_n$.

The remainder of this paper is organized as follows: Section \ref{sec:notation} is devoted to a somewhat more careful recollection/definition of the notation, in particular of the graphical models sketched above.
Section \ref{sec:ideal} discusses the bi-ideal $\fSGraphs_n^{disc}$.
The cohomology computation for the quotient that finishes the proof of Theorem \ref{thm:main} is contained in section \ref{sec:cohom}.
Finally, in section \ref{sec:application} we present an application of our result to deformation quantization: We show that any $L_\infty$ stable formality morphism may be extended (up to homotopy, see Theorem \ref{thm:defq} below) to a homotopy Gerstenhaber formality morphism.

\begin{figure}
 \[
\begin{tikzpicture}[baseline=-.65ex,yshift=-1cm]
\draw (1,1) circle (1.2);
\node[draw,circle, minimum height=.5cm,minimum width=.5cm] at (.5,.5) {1};
\node[draw,circle,minimum height=.5cm,minimum width=.5cm] at (.7,1.5) {2};
\node[draw,circle, minimum height=.9cm,minimum width=.9cm] at (1.33,.65) {3};
\end{tikzpicture}
\circ_3
\begin{tikzpicture}[baseline=-.65ex]
\draw (0,0) circle (1.2);
\node[draw,circle, minimum height=.8cm,minimum width=.8cm] at (-.5,0) {a};
\node[draw,circle, minimum height=.8cm,minimum width=.8cm] at (.5,0) {b};
\end{tikzpicture}
=
\begin{tikzpicture}[baseline=-.65ex,yshift=-1cm]
\draw (1,1) circle (1.2);
\node[draw,circle, minimum height=.5cm,minimum width=.5cm] at (.5,.5) {1};
\node[draw,circle,minimum height=.5cm,minimum width=.5cm] at (.7,1.5) {2};
\begin{scope}[xshift=1.43cm,yshift=.65cm, scale=.5]
\draw[dotted] (0,0) circle (1.2);
\node[draw,circle, minimum height=.1cm,minimum width=.1cm] at (-.5,0) {$\scriptstyle a$};
\node[draw,circle, minimum height=.1cm,minimum width=.1cm] at (.5,0) {$\scriptstyle b$};
\end{scope}
\end{tikzpicture}
\]
\caption{\label{fig:LDcomposition} An example for the composition (gluing) in the little disks operad. }
\end{figure}
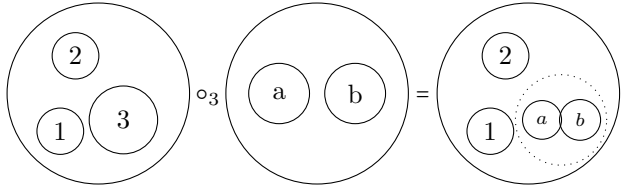

\subsection*{Acknowledgements}
I am grateful for helpful discussions with Anton Khoroshkin and Victor Turchin. In particular, Anton and Sergei Merkulov have independently obtained (yet unpublished) results in the direction of those of the present paper.

\section{Notation}\label{sec:notation}
We generally work over the ground field $\R$. All vector spaces and differential graded vector spaces considered are $\R$-vector spaces.
We use cohomological conventions for all differential graded (dg) vector spaces we consider, i.e., the differential always has degree $+1$ under all circumstances. In particular, this means that, for example, the homology of a topological space is considered to be concentrated in non-positive degrees. 

We use the language of operads and colored operads, and mostly follow the conventions of the textbook by Loday and Vallette \cite{lodayval}, to which we frequently appeal.
Operads of particular importance for us are the associative operad $\Ass$, the Lie operad $\Lie$, and the $n$-Poisson operad $\Pois_n$, generated by two binary operations of degree 0 and $1-n$. Finally, we will need the braces operad $\Br$, cf. \cite{graphthings} for the definition we will use here.

We denote the $k$-fold (de)suspension of an operad $\op P$ by $\op P\{k\}$, cf. \cite{lodayval}. In particular, we denote $\Lie_k:=\Lie\{k-1\}$, so that one has a map $\Lie_n\to \Pois_n$.

At several (few) places we will use quasi-free resolutions of these operads. In particular, we denote by 
\[
 \hoLie_2 =\Omega(\Lie_2^\vee)
\]
the operadic cobar construction of the Koszul dual cooperad $\Lie_2^\vee$ of $\Lie_2$.
Similarly, we define $\hoPois_2 =\Omega(\Pois_2^\vee)$.

\subsection{Compactified configuration space operads}\label{sec:compactified}
Let us also briefly recall the definition of the compactified configuration spaces models $\SFM_n$ for the $n$-Swiss Cheese operads. First consider the space $\Conf_r(\R^n)$ of $r$ distinguishable points in $\R^n$. It comes equipped with a natural action of the group $\R_{>0}\ltimes \R^n$ by scaling and translation. The Fulton-MacPherson-Axelrod-Singer compactification of the quotient
$$
\FM_n(r) = (\Conf_r(\R^n)/\R_+\ltimes \R^n)^-
$$
is defined by iterated real blowups (or rather bordifications) of the boundary, cf. \cite{GJ, K1, K2}. The spaces $\FM_n(-)$ naturally assemble into a topological operad $\FM_n$, with the operadic conposition defined by ``inserting'' one configuration of points at some point of another. The composition takes values on the boundary.
One may or may not define $\FM_n(0)$ to consist of a single point, insertion of which is the forgetful map. To distinguish both variants we will call the ``unital version'' $\uFM_n$, with $\uFM_n(0)=*$, while $\FM_n(0)=\emptyset$.
In this note, we will mostly disconsider this zero-ary operation until Remark \ref{rem:zeroary}.

Similarly, we consider the the space of configurations $\Conf_{r,s}(\R_{\geq 0}\times\R^{n-1})$ of $r$ points in the upper halfspace $\R_{> 0}\times\R^{n-1}$ and $s$ points on the boundary $\R^{n-1}$. This configuration space admits a natural action of the group $\R_{>0}\ltimes \R^{n-1}$ by scaling and translation parallel to the boundary.
One defines again the Fulton-MacPherson-Axelrod-Singer compactification of the quotient
\[
 \FMH_n(r,s) = (\Conf_{r,s}(\R_{\geq 0}\times\R^{n-1})/\R_+\ltimes \R^{n-1})^-.
\]
Again, by insertion of configurations the pair $\SFM_n=(\FM_n,\FMH_n)$ defines a topological operad \cite{voronov_swisscheese}, which we will use as our variant of the $n$-Swiss Cheese operad.
Finally, we will consider another a variant $\uSFM_n=(\uFM_n,\uFMH_n)$ by declaring that $\uFMH_n(0,0)=*$, the additional operation acting by forgetting points.

\section{Graphical (co)operads}
We briefly recall here the definition of several combinatrial (co)operads. Most definitions have appeared in the literature, see, e.g., \cite{LV, graphthings, K2, K1}.
Let $\gra_{r,k}$ be the set of directed graphs with vertex set $\{1,\dots,r\}$ and edges labelled by $\{1,\dots,k\}$. It carries a natural action of the group $S_k\ltimes (S_2)^k$ by permuting the edge labels and edge directions.
One defines vector spaces 
\[
\dGra_n(r) := \bigoplus_k\left( \R\langle \gra_{r,k} \rangle\otimes (\R[n-1])^{\otimes k}\right)_{S_k}
\]
where the action of $S_k$ is the diagonal one, by permutation of edge labels and of the weight factors $\K[n-1]$, with appropriate Koszul signs.
The spaces $\dGra_n(r)$ assemble into a cooperad $\dGra_n$, with the right $S_r$ action by permuting vertex labels, and the cocomposition by "subgraph contraction", cf. Figure \ref{fig:dGracocomp}. 
Futhermore, $\dGra_n$ is a Hopf cooperad, with the Hopf structure defined by gluing two graphs at their vertices.

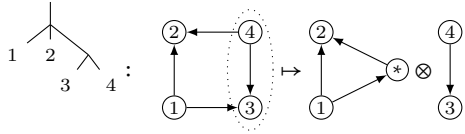
\begin{figure}
 \[
   \begin{tikzpicture}[baseline=-.65ex]
  \node (v1) at (0,.2) {$\scriptstyle 1$};
  \node (v2) at (0.5,.2) {$\scriptstyle 2$};
  \coordinate (vp) at (1,.2) ;
  \coordinate (vpp) at (.5,.6) ;
  \node (v3) at (0.7,-.2) {$\scriptstyle 3$};
  \node (v4) at (1.3,-.2) {$\scriptstyle 4$};
  \draw (vpp) edge (v1) edge (v2) edge (vp) edge +(0,.3)
  (vp) edge (v3) edge (v4);
 \end{tikzpicture}
 \colon\quad
  \begin{tikzpicture}[baseline=-.65ex]
   \node[ext] (v1) at (-.5,-.5) {$\scriptstyle 1$};
   \node[ext] (v2) at (.5,-.5) {$\scriptstyle 3$};
   \node[ext] (v3) at (.5,.5) {$\scriptstyle 4$};
   \node[ext] (v4) at (-.5,.5) {$\scriptstyle 2$};
   \draw[-latex] (v1) edge (v4) edge (v2)
   (v3) edge (v4) edge (v2);
   \draw[dotted] (.5,0) ellipse (.3 and .8); 
  \end{tikzpicture}
\mapsto 
  \begin{tikzpicture}[baseline=-.65ex]
   \node[ext] (v1) at (-.5,-.5) {$\scriptstyle 1$};
   \node[ext] (vx) at (.5,0) {$\scriptstyle *$};
   \node[ext] (v4) at (-.5,.5) {$\scriptstyle 2$};
   \draw[-latex] (v1) edge (v4) edge (vx)
   (vx) edge (v4);
  \end{tikzpicture}
  \otimes 
    \begin{tikzpicture}[baseline=-.65ex]
   \node[ext] (v2) at (.5,-.5) {$\scriptstyle 3$};
   \node[ext] (v3) at (.5,.5) {$\scriptstyle 4$};
   \draw[-latex] (v3) edge (v2);
  \end{tikzpicture}
 \]
 \caption{\label{fig:dGracocomp} A example of a cooperadic cocomposition in the cooperad $\dGra_n$, corresponding to the tree shown on the left.
 }
\end{figure}

There is furthermore an interesting quotient Hopf cooperad
\[
\Gra_n \leftarrow \dGra_n
\]
obtained by identifying a directed edge with $(-1)^{n}$ times the oppositely directed edge.
The Hopf operad $\Gra_n$ (and hence also $\dGra_n$) comes equipped with a natural map 
\[
\Gra_n \to \Omega_\PA(\FM_n)
\]
by sending a graph $\Gamma$ to the differentlal form
\[
\omega_\Gamma' := \bigwedge_{(i,j)} \pi_{ij}^* \Omega_{S^{n-1}}
\]
where the product is over the edges in the graph, 
\[
\pi_{ij} : \FM_n(r) \to \FM_n(2) \cong S^{n-1}
\]
is the forgetful map forgetting all points but points $i$ and $j$, and finally $\Omega_{S^{n-1}}$ is the round volume form on the sphere $S^{n-1}$.
Furthermore, there is a natural map of Hopf cooperads $\Gra_n\to \Poiss_n^*$, (pre-)dual to the map of Hopf operads $\Poiss_n\to \Gra_n^*$ which maps the product and bracket generators as follows:
\begin{align*}
- \wedge - &\mapsto 
\begin{tikzpicture}[baseline=-.65ex]
\node[ext] (v) at (0,0) {1};
\node[ext] (w) at (0.7,0) {2};
\end{tikzpicture}
&
[-,-] &\mapsto 
\begin{tikzpicture}[baseline=-.65ex]
\node[ext] (v) at (0,0) {1};
\node[ext] (w) at (1,0) {2};
\draw (v) edge (w) ;
\end{tikzpicture}\,.
\end{align*}

From the cooperads $\Gra_n$ and $\dGra_n$ we may build further cooperads by (co)operadic twisting \cite{vastwisting}. 
Concretely, we define collections $\fGraphs_n$, $\fdGraphs_n$ such that 
\begin{align*}
\fGraphs_n(r) &= \bigoplus_{r'\geq 0} (\Gra_n(r+r')\otimes (\K[-n])^{\otimes r'} )_{S_{r'}} \\
\fdGraphs_n(r) &= \bigoplus_{r'\geq 0} (\dGra_n(r+r')\otimes (\K[-n])^{\otimes r'} )_{S_{r'}}.
\end{align*}
In other words, one formally "fills" some of the "slots" in cooperations. In pictures, we will draw the corresponding (unlabeled) vertices black, cf. \eqref{equ:graphsexample}.
We furthermore call the $r$ labelled vertices the "external" vertices and the ($r'$) black vertices the "internal" vertices.
The collections $\fGraphs_n$ and $\fdGraphs_n$ carry natural dg Hopf cooperad structures.
The differential is defined by contracting edges, removing one internal vertex, e.g., 
\begin{align*}
 \begin{tikzpicture}[baseline=-.65ex, scale=.7]
  \node[int] (v) at (0,0) {};
  \node[int] (w) at (1,0) {};
  \draw[-latex] (v) edge (w);
  \draw (v) edge +(-.5,.5) edge +(-.5,0) edge +(-.5,-.5);
  \draw (w) edge +(.5,.5) edge +(.5,0) edge +(.5,-.5);
 \end{tikzpicture}
&\mapsto
 \begin{tikzpicture}[baseline=-.65ex, scale=.7]
  \node[int] (v) at (0,0) {};
  \draw (v) edge +(-.5,.5) edge +(-.5,0) edge +(-.5,-.5);
  \draw (v) edge +(.5,.5) edge +(.5,0) edge +(.5,-.5);
 \end{tikzpicture}
 &\text{or}
 & &
  \begin{tikzpicture}[baseline=-.65ex, scale=.7]
  \node[ext] (v) at (0,0) {};
  \node[int] (w) at (1,0) {};
  \draw (v) edge (w);
  \draw (v) edge +(-.5,.5) edge +(-.5,0) edge +(-.5,-.5);
  \draw (w) edge +(.5,.5) edge +(.5,0) edge +(.5,-.5);
 \end{tikzpicture}
&\mapsto
 \begin{tikzpicture}[baseline=-.65ex, scale=.7]
  \node[ext] (v) at (0,0) {};
  \draw (v) edge +(-.5,.5) edge +(-.5,0) edge +(-.5,-.5);
  \draw (v) edge +(.5,.5) edge +(.5,0) edge +(.5,-.5);
 \end{tikzpicture}\, .
\end{align*}
The Hopf structure is defined as before by gluing graphs at their external vertices, and the finally the cooperad structure is also defined as before by subgraph contraction.

There are a natural maps of Hopf cooperads $\Gra_n\to \fGraphs_n$, and the map $\Gra_n\to \Omega(\FM_n)$ from before factors through $\fGraphs_n$. More concretely, a graph $\Gamma\in \Gra_n(r+r')$ is sent to the differential form 
\begin{equation}\label{equ:Graphsintegral}
\omega_\Gamma = \int_{\FM_n(r+r')\to \FM_n(r)} \omega_\Gamma'
\end{equation}
where $\int_{\FM_n(r+r')\to \FM_n(r)}: \Omega(\FM_n(r+r'))\to \Omega(r)$ is the fiber integral, for a more precise definition of this operation we refer to \cite{HLTV}.

The map $\fGraphs_n\to \Omega(\FM_n)$ is not a quasi-isomorphism. However, one may pass to a quotient operad
\[
\Graphs_n \leftarrow \fGraphs_n
\]
obtained by equating graphs to zero that contain connected components of only internal vertices. It is easy to check that for such graphs $\Gamma$ we have $\omega_{\Gamma}=0$ so that one obtains a map  $\Graphs_n\to \Omega(\FM_2)$.
Now the main Theorem by Kontsevich, Lambrechts and Voli\'c is that this map is a quasi-isomorphism.
\begin{thm}[Kontsevich \cite{K2}, Lambrechts-Voli\'c \cite{LV}]
The map
\[
\Graphs_n \to \Poiss_n^*
\]
is a quasi-isomorphism of Hopf cooperads. The map 
\[
\Graphs_n\to \Omega_{\PA}(\FM_2)
\]
is a quasi-isomorphism of Hopf collections compatible with the (co)operadic compositions in the sense that the diagram \ref{equ:opcompat1} commutes.
\end{thm}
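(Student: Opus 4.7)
The plan is to establish the theorem in two conceptually separate stages: (a) the algebraic quasi-isomorphism $\Graphs_n\to \Poiss_n^*$ of Hopf cooperads, and (b) the compatibility of $\Graphs_n\to \Omega_{\PA}(\FM_n)$ with the Hopf and cooperadic structures, which combined with (a) and the classical Arnold/Cohen identification $H^*(\FM_n(r))\cong \Poiss_n^*(r)$ yields the second quasi-isomorphism. Well-definedness of $\Graphs_n\to \Poiss_n^*$ is a direct check on generators: every graph with an internal vertex is sent to zero, the edge-contraction differential vanishes on the image, the product and cocomposition rules on graphs with only external vertices match those of $\Poiss_n^*$ (two external vertices with no edge for the product, with one edge for the bracket), and externally disconnected graphs are already killed by the quotient defining $\Graphs_n$.

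For the cohomological content of (a), I would filter $\Graphs_n(r)$ by the number of edges incident to internal vertices, so that the induced differential on the associated graded only contracts internal-internal edges. A first reduction pairs off valence-one internal vertices (``antennae'') with their contracted counterparts via an explicit contracting homotopy, cutting the computation down to graphs whose internal vertices all have valence $\geq 2$. A second spectral sequence, indexed by the number of connected components of the internal subgraph, uses the fact that any connected cluster of internal vertices attached to the external part by a bounded number of edges can be collapsed; genuinely ``closed'' internal components are killed by the quotient defining $\Graphs_n$. What survives on the $E_\infty$ page is exactly the subcomplex $\Gra_n(r)\subset \Graphs_n(r)$ of graphs with no internal vertex, with zero induced differential, and the map $\Gra_n(r)\to \Poiss_n^*(r)$ is a quasi-isomorphism by the classical Arnold/Cohen presentation of $H^*(\FM_n(r))$.

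For (b), the compatibility statements reduce to a Stokes-theorem analysis of the fiber integral \eqref{equ:Graphsintegral}. The exterior derivative pushes under the fiber integration to an integral over the fiber boundary of $\FM_n(r+r')\to \FM_n(r)$. The boundary strata are of three types: pairs of points colliding (which reproduces the edge-contraction differential on $\Graphs_n$), proper sub-configurations of points colliding (which matches the cooperadic cocomposition of $\Graphs_n$ as in diagram \eqref{equ:opcompat1}), and ``vanishing'' strata whose integrals are zero by Kontsevich's sphere-vanishing lemma (antennae of valence $\leq 1$, and similar combinatorial configurations). Compatibility with the Hopf (cup) product is automatic from the multiplicativity of the integrand and the Fubini-type factorization of the fiber integral for disjoint products of graphs. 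Combining (b) with (a) and a two-out-of-three argument applied to the commutative triangle $\Graphs_n(r)\to \Omega_{\PA}(\FM_n(r))\to H^*(\FM_n(r))\cong \Poiss_n^*(r)$ then yields the desired quasi-isomorphism $\Graphs_n\to \Omega_{\PA}(\FM_n)$.

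The hardest step is the vanishing-and-collapse analysis in (a): identifying precisely which internal-vertex configurations survive cohomologically, and showing that the quotient by externally disconnected graphs is exactly what is needed to kill the unwanted contributions. Making this spectral sequence argument convergent and rigorous — in particular, handling graphs in which every internal vertex has valence $\geq 2$ and isolating the role of the quotient by $\fGraphs_n^{disc}$ — is the technical core of the Lambrechts-Voli\'c treatment, and requires an explicit combinatorial contracting homotopy rather than just a naive filtration argument.
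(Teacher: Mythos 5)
The paper does not prove this theorem: it is quoted from Kontsevich \cite{K2} and Lambrechts--Voli\'c \cite{LV} (it is essentially a restatement of Theorem \ref{thm:KLV}), so there is no in-paper argument to compare against. Judged on its own terms, your sketch of part (b) (Stokes' theorem on the fiberwise boundary, vanishing of hidden faces, Fubini for the Hopf structure, and a surjectivity-plus-dimension-count to conclude) is the standard and correct outline. Part (a), however, contains a genuine error at its crucial step.

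You claim that after your spectral sequence collapses, ``what survives on the $E_\infty$ page is exactly the subcomplex $\Gra_n(r)\subset\Graphs_n(r)$ of graphs with no internal vertex, with zero induced differential, and the map $\Gra_n(r)\to\Poiss_n^*(r)$ is a quasi-isomorphism.'' This is false: $\Gra_n(r)$ carries zero differential and is vastly larger than $\Poiss_n^*(r)$. The map $\Gra_n(r)\to\Poiss_n^*(r)$ is a surjection with large kernel, containing for instance the graph with a double edge between $1$ and $2$ and the Arnold relation $\omega_{ij}\omega_{jk}+\omega_{jk}\omega_{ki}+\omega_{ki}\omega_{ij}$. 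In $\Graphs_n(r)$ these kernel elements are exact, but their primitives necessarily contain internal vertices (e.g.\ the Arnold relation is the coboundary of the tripod with one internal vertex joined to $i,j,k$). Hence the internal-vertex part of the complex cannot be ``entirely collapsed away'': it contributes essential differentials \emph{into} the span of graphs without internal vertices, and the cohomology is the proper quotient $\Poiss_n^*(r)$ of $\Gra_n(r)$, not $\Gra_n(r)$ itself. Relatedly, your opening filtration (by the number of edges incident to internal vertices) does not isolate internal--internal contractions on the associated graded: contracting an edge from an internal vertex into an external one also drops that count by exactly $1$ whenever all other neighbours of that internal vertex are internal. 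The actual arguments in the literature avoid both problems --- Lambrechts--Voli\'c by an inductive splitting of $\mathcal D(r)$ that mirrors the Fadell--Neuwirth fibrations $\Conf_r\to\Conf_{r-1}$, and later combinatorial treatments by carefully designed filtrations whose final page is $\Poiss_n^*(r)$ with the Arnold and double-edge relations imposed by the tripod and bivalent-vertex coboundaries. Your sketch needs to be repaired so that the limit of the spectral sequence is this quotient rather than $\Gra_n(r)$.
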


The goal of the present paper is to describe the generalization of the above construction to the Swiss Cheese setting.
Define $\kgra_{r,s,k}$ to be the set of directed graphs with vertex set $\{1,\dots,r \}\sqcup \{\bar 1,\dots,\bar s\}$, and directed edges numbered $\{1,\dots, k\}$, such that no edge starts at a vertex in $\{\bar 1,\dots,\bar s\}$.
Following Kontsevich, we call the vertices $\{1,\dots,r \}$ the type I vertices and the $\{\bar 1,\dots,\bar s\}$ the type II vertices.
We define collections 
\[
\KGra_n(r,s) = 
\begin{cases}
\bigoplus_k\left( \R\langle \kgra_{r,s,k} \rangle\otimes (\R[n-1])^{\otimes k}\right)_{S_k} &\text{for $n>1$} \\
\bigoplus_k\left( \R\langle \kgra_{r,s,k} \rangle\otimes (\R[n-1])^{\otimes k} \otimes \R[S_s] \right)_{S_k} &\text{for n=1}
\end{cases}.
\]
Note that we distinguish the case $n=1$, and the factor $\R[S_s]$ occurring in the definition may be interpreted as equipping our graphs with additional data, namely an additional linear ordering on the set of type II vertices.
The bi-collection $\KGra_n$ carries a natural right comodule structure over the operad $\dGra_n$ by contracting subgraphs of type I vertices only. Furthermore, $\dGra_n$ and $\KGra_n$ assemble into a two-colored operad $\SGra_n$ such that 
\begin{align*}
\SGra_n^1(r,s) &=  
\begin{cases}
\dGra_n(r) & \text{for $s=0$} \\
0 & \text{otherwise}
\end{cases} 
\\
\SGra_n^2(r,s) &= \KGra_n(r,s).
\end{align*}

The map $\dGra_n\to \Omega(\FM_n)$ may be extended to a map of colored collections 
\begin{equation}\label{equ:SGraToSFM}
\SGra_n \to \Omega(\SFM_n)
\end{equation}
by sending a graph $\Gamma$ in $\KGra_n(r,s)$ to the differential form 
\[
\omega_\Gamma' = \bigwedge_{(i,j)\text{ edge}} \pi_{ij}^* \tilde \Omega
\]
where $\pi_{ij}$ is the forgetful map forgetting all but vertices $i$ and $j$ and $\tilde \Omega$ is the hyperbolic $n-1$-form described in the introduction, cf. \eqref{equ:hyperbolicanglepicture}.

The map \eqref{equ:SGraToSFM} is not a quasi-isomorphism (of course). Again, to repair the defect we may pass to a twisted version, using the colored (co)operadic twisting construction, cf. \cite{graphthings}, \cite{ricardoBr}. 
Concretely, this means that we extend the cooperad $\fdGraphs_n$ to a two colored cooperad $\fSGraphs_n=(\fGraphs_n,\fKGraphs_n)$, whose color 2 components are given by the bi-collection
\[
\fKGraphs_n(r,s) = \bigoplus_{r',s'} \left( \KGra_n(r+r',s+s') \otimes (\R[-n])^{\otimes r'} \otimes (\R[1-n])^{\otimes s'} \right)_{S_{r'}\times S_{s'}}.
\]

Elements may be depicted as graphs with four kinds of vertices: internal and external vertices, of type either I or II, cf. \eqref{equ:kgraphsexample}.
Again, similarly to \eqref{equ:Graphsintegral} we define for such a graph $\Gamma$ in $\KGra_n(r+r',s+s')$ a PA form $\omega_\Gamma\in \Omega_\PA(\FMH(r,s))$ by a fiber integral 
\[
\omega_\Gamma = \int_{\FMH_n(r+r',s+s')\to \FMH_n(r,s)} \omega_\Gamma'
\]
along the fiber of the forgetful map $\FMH(r+r', s+s')\to \FMH(r,s)$. Again, we refer to \cite{HLTV} for the more precise definition of the fiber integral for PA forms.

We may equip $\fSGraphs_n$ with a two colored dg Hopf operad structure. The Hopf and cooperad structures are induced from those on $\SGra_n$ and are given by gluing graphs at the external vertices, and by subgraph contraction respectively, as before.
The definition of the differential in the operadic twisting construction is more tricky. It is defined so that the map $\fSGraphs_n\to \Omega_\PA(\SFM_n)$ commutes with the differentials.  
More concretely, the arity $(0,0)$ components of the map $\fSGraphs_n\to \Omega_\PA(\SFM_n)$ assign to each graph $\Gamma$ in $\KGraphs_n(0,0)$ without external vertices, and with say $r'$ and $s'$ internal vertices of types I and II a number
\begin{equation}\label{equ:KontsevichWeight}
c_\Gamma = \int_{\FMH_n(r',s')} \omega_\Gamma'.
\end{equation}
The differential on a general graph $\Gamma\in \KGraphs_n(r,s)$ now has 3 pieces: First, we may contract an edge connecting to least one internal type I vertex.
\[
\begin{tikzpicture}[baseline=-.65ex]
\node[int] (v) at (0,0) {};
\node[int] (w) at (1,0) {};
\draw (v) edge +(-.5,-.5) edge +(-.5,0) edge +(-.5, .5);
\draw (w) edge +(.5,-.5) edge +(.5,0) edge +(.5, .5);
\draw[-latex] (v) edge (w);
\end{tikzpicture}
\mapsto
\begin{tikzpicture}[baseline=-.65ex]
\node[int] (v) at (0,0) {};
\draw (v) edge +(-.5,-.5) edge +(-.5,0) edge +(-.5, .5);
\draw (v) edge +(.5,-.5) edge +(.5,0) edge +(.5, .5);
\end{tikzpicture}
\]
Secondly, we may contract a subgraph $\Gamma_1$ consisting of at most one external vertex, with coefficient $c_{\Gamma_1}$.
\[
  \begin{tikzpicture}[baseline=-.65ex]
  \draw (-.5,-.5) -- (4.5,-.5);
  \node[ext] (v1) at (0,0) {$\scriptstyle 1$};
  \node[ext, fill=white] (v2) at (2,-.5) {$\scriptstyle 1$};
  \node[ext] (v3) at (0,1) {$\scriptstyle 2$};
  \node[ext] (v4) at (2,1) {$\scriptstyle 3$};
  \node[ext, fill=white] (v5) at (0,-.5) {$\scriptstyle 2$};
  \node[int] (w1) at (.66,.5) {};
  \node[int] (w2) at (1.33,.5) {};
  \begin{scope}[xshift=.4cm, yshift=-.1cm]
  \node[int] (w3) at (3,.125) {};
  \node[int] (w4) at (3.5,.125) {};
  \node[int] (w5) at (3,.5) {};
  \node[int] (w6) at (3.5,.5) {};
  \end{scope}
  \node[int] (u1) at (1,-.5) {};
  \node[int] (u2) at (3,-.5) {};
  \draw[-latex] (v1) edge (v2) edge (v3) edge (w1)
        (w1) edge (w2) edge (v3)
        (w2) edge (v2) edge (v4)
        (w3) edge (w4) edge (w5) edge (w6)
        (w4) edge (w5) edge (w6)
        (w5) edge (w6)
        (v4) edge (u1) edge(u2)
        (w3) edge (u2);       
 \node at (4.2,-.3) {$\Gamma_1$};
 \clip (2.2,-0.5) rectangle (4.5,2);
  \draw[dotted] (3.5,-.5) circle (1.2);
 \end{tikzpicture}
 \mapsto 
 c_{\Gamma_1} \, 
   \begin{tikzpicture}[baseline=-.65ex]
  \draw (-.5,-.5) -- (4.5,-.5);
  \node[ext] (v1) at (0,0) {$\scriptstyle 1$};
  \node[ext, fill=white] (v2) at (2,-.5) {$\scriptstyle 1$};
  \node[ext] (v3) at (0,1) {$\scriptstyle 2$};
  \node[ext] (v4) at (2,1) {$\scriptstyle 3$};
  \node[ext, fill=white] (v5) at (0,-.5) {$\scriptstyle 2$};
  \node[int] (w1) at (.66,.5) {};
  \node[int] (w2) at (1.33,.5) {};
  \begin{scope}[xshift=.4cm, yshift=-.1cm]
  \end{scope}
  \node[int] (u1) at (1,-.5) {};
  \node[int] (u2) at (3,-.5) {};
  \draw[-latex] (v1) edge (v2) edge (v3) edge (w1)
        (w1) edge (w2) edge (v3)
        (w2) edge (v2) edge (v4)
        (v4) edge (u1) edge (u2);       
 \end{tikzpicture}
\]
Finally, we identify a subgraph $\Gamma'$ consisting of all external vertices. The third piece of the differential then sends $\Gamma$ to $\pm c_{\Gamma/\Gamma'} \Gamma'$.
\[
  \begin{tikzpicture}[baseline=-.65ex]
  \draw (-.5,-.5) -- (4.5,-.5);
  \node[ext] (v1) at (0,0) {$\scriptstyle 1$};
  \node[ext, fill=white] (v2) at (2,-.5) {$\scriptstyle 1$};
  \node[ext] (v3) at (0,1) {$\scriptstyle 2$};
  \node[ext] (v4) at (2,1) {$\scriptstyle 3$};
  \node[ext, fill=white] (v5) at (0,-.5) {$\scriptstyle 2$};
  \node[int] (w1) at (.66,.5) {};
  \node[int] (w2) at (1.33,.5) {};
    \begin{scope}[xshift=0cm, yshift=1.2cm]
  \node[int] (w3) at (3,.5) {};
  \node[int] (w4) at (4,.5) {};
  \node[int] (w5) at (3,1.5) {};
  \node[int] (w6) at (4,1.5) {};
  \end{scope}
  \node[int] (u1) at (1,-.5) {};
  \node[int] (u2) at (3,-.5) {};
  \draw[-latex] (v1) edge (v2) edge (v3) edge (w1)
        (w1) edge (w2) edge (v3)
        (w2) edge (v2) edge (v4)
        (w3) edge (w4) edge (w5) edge (w6)
        (w4) edge (w5) edge (w6)
        (w5) edge (w6)
        (v4) edge (u1) edge(u2)
        (w3) edge (u2);
          \node at (3.65,-.3) {$\Gamma'$};
 \clip (-.5,-0.5) rectangle (4.5,4.5);
  \draw[dotted] (1.5,-.5) circle (2.3);       
 \end{tikzpicture}
 \mapsto 
 c_{\Gamma/\Gamma'}
    \begin{tikzpicture}[baseline=-.65ex]
  \draw (-.5,-.5) -- (4.5,-.5);
  \node[ext] (v1) at (0,0) {$\scriptstyle 1$};
  \node[ext, fill=white] (v2) at (2,-.5) {$\scriptstyle 1$};
  \node[ext] (v3) at (0,1) {$\scriptstyle 2$};
  \node[ext] (v4) at (2,1) {$\scriptstyle 3$};
  \node[ext, fill=white] (v5) at (0,-.5) {$\scriptstyle 2$};
  \node[int] (w1) at (.66,.5) {};
  \node[int] (w2) at (1.33,.5) {};
  \begin{scope}[xshift=.4cm, yshift=-.1cm]
  \end{scope}
  \node[int] (u1) at (1,-.5) {};
  \node[int] (u2) at (3,-.5) {};
  \draw[-latex] (v1) edge (v2) edge (v3) edge (w1)
        (w1) edge (w2) edge (v3)
        (w2) edge (v2) edge (v4)
        (v4) edge (u1) edge (u2);       
 \end{tikzpicture}
\]

We emphasize that the differential on $\KGraphs_n$ is relatively complicated, involving many non-zero terms. In particular, for $n=2$ the cooperad $\KGraphs_2$ encodes a stable formality quasi-isomorphism in the sense of \cite{vasilystable}.

As before the map 
\[
\fKGraphs_n\to \Omega(\FMH_n)
\]
is not (yet) a quasi-isomorphism. We shall repair this defect in the following section.

%\begin{rem}
%Let $\ELie_n$ be the two-colored operad  
%More generally, the colored operadic twisting construction builds from a
%\end{rem}

\section{The (bi-)ideal $\fSGraphs_n^{disc}$}\label{sec:ideal}
We split the construction of the bi-ideal $\fSGraphs_n^{disc}$ in two cases.

For $n\geq 3$ we define the bi-ideal $\fSGraphs_n^{disc}$ in the obvious way by defining a graph $\Gamma\in \fKGraphs_n$ to be externally disconnected if it contains a connected component made of internal vertices only.
We note that this definition yields a bi-ideal, i.e., it is compatible with the dg Hopf cooperad structure by the following reasoning. We only consider the ``novel'' piece of that statement, i.e., the bi-ideal conditions on $\fGraphs_n^{1,disc}\subset \fGraphs_n^{1,disc}$, and take the statements on $\fGraphs_n^{disc}$ for granted.
\begin{itemize}
 \item By inspection it is clear that the cooperadic cocomposition cannot co-compose an externally disconnected graph into two externally connected graphs. Hence $\fSGraphs_n^{disc}$ is a cooperadic coideal.
 \item The commutative product (fusion at external vertices) cannot remove connected components of internal vertices only, so each $\fSGraphs_n^{1,disc}(r,s)$ is a graded ideal in the graded algebra $\fSGraphs_n^1(r,s)$.
\item The subset $\fSGraphs_n^{disc}$ is closed under the differential since the numbers $c_\Gamma$ of \eqref{equ:KontsevichWeight} defined above are zero for disconnected graphs $\Gamma$ and $n\geq 3$.
\end{itemize}
Hence we may define the two-colored quotient dg Hopf cooperad 
\[
\SGraphs_n = \fSGraphs_n / \fSGraphs_n^{disc}.
\]

To check that the map $\fSGraphs_n\to \Omega_\PA(\SFM_n)$ factors through the quotient $\SGraphs_n$, we have to check that the fiber integrals \eqref{equ:integral2} vanish on externally disconnected graphs $\Gamma$. However, this is pretty obvious: Suppose some $k'$ type I and $l'$ type II vertices form a connected component without external vertices. Using Fubini's theorem to factor out the integral over the positions of these internal vertices, the fiber integral then reads 
\[
 \omega_\Gamma = \int_{\FM_n^1(r+k-k',s+l-l')\to\FM_n^1(r,s)}
 \int_{\FM_n^1(r+k,s+l)\to\FM_n^1(r+k-k',s+l-l')}
 \bigwedge_{ (i,j)\text{ edge} }
 \pi_{ij}^*\Omega_{S^{n-1}} .
\]
The inner integral yields zero, because the piece of the pieces of the integrand depending on the locations of the $k'+l'$ internal points are basic under scaling and translation and can thus not contribute a top degree piece to the differential form.

\begin{rem}
Note that instead of defining $\SGraphs_n$ as a quotient as above, we could have defined it to consist only of externally connected graphs from the start. This would have avoided or made obvious some parts of the above discussion. However, we next want to treat the case $n=2$, where the above statements are not immediate.
\end{rem}

The case $n=2$: 
First note that in the case $n=2$ one cannot define $\fSGraphs_2^{disc}$ in the same way as before, since there is the disconnected graph 
$$
\Gamma = 
\begin{tikzpicture}[baseline=-.65ex]
\draw(0,0) -- (1,0);
\node[int] at (.2,0) {};
\node[int] at (.8,0) {};
\end{tikzpicture}
$$ 
which is assigned nonzero weight $c_\Gamma=1\neq 0$ under \eqref{equ:KontsevichWeight}.
Hence the naively defined subspace of externally disconnected graphs is not closed under the differential, i.e., the differential can map a disconnected graph to a connected one.
To find a better definition of ``externally disconnected'', we will first consider a change of basis in $\fSGraphs_2^{disc}$. By a Lie decorated graph we mean a graph $\Gamma$ as in $\fSGraphs_2$, but with a linear ordering on the \emph{external} type II vertices only, 
and with an arrangement of the internal type II vertices in a product of formal Lie expressions. It is best explained by a picture:
\[
 \begin{tikzpicture}
 \draw (-.5,0) -- (3.5,0);
  \node[ext] (v1) at (0.5,1) {$\scriptstyle 1$};
  \node[ext] (v2) at (3,1) {$\scriptstyle 2$};
  \node[ext, fill=white](u1) at (0,0) {$\scriptstyle 1$};
  \node[ext, fill=white](u2) at (2,0) {$\scriptstyle 2$};
  \node at (-.2, 0) {$ [$};
  \node at (.7, 0) {$ ]$};
  \node at (.25, -.1) {$,$};
  \node[int] (w1) at (.5,0) {};
  \node[int] (w2) at (3,0) {};
  \draw [-latex] (v1) edge (w1) (v2) edge (w2);
 \end{tikzpicture}
\]
Such Lie decorated graphs naturally define elements of $\fSGraphs_2$ by formally applying the Poincar\'e-Birkhoff-Witt map. Again, this procedure is best explained by an example. The Lie decorated graph above corresponds to the linear combination of ordinary graphs
\begin{multline*}
 \pm
 \begin{tikzpicture}[baseline=-.65ex, scale=.5]
 \draw (-1.5,0) -- (2.5,0);
  \node[ext] (v1) at (-0.7,1) {$\scriptstyle 1$};
  \node[ext] (v2) at (-1.5,1) {$\scriptstyle 2$};
  \node[ext, fill=white](u1) at (0,0) {$\scriptstyle 1$};
  \node[ext, fill=white](u2) at (2,0) {$\scriptstyle 2$};
  \node[int] (w1) at (-.7,0) {};
  \node[int] (w2) at (-1.5,0) {};
  \draw [-latex] (v1) edge (w1) (v2) edge (w2);
 \end{tikzpicture}
\pm
  \begin{tikzpicture}[baseline=-.65ex, scale=.5]
 \draw (-1.5,0) -- (2.5,0);
  \node[ext] (v1) at (-1.5,1) {$\scriptstyle 1$};
  \node[ext] (v2) at (-.7,1) {$\scriptstyle 2$};
  \node[ext, fill=white](u1) at (0,0) {$\scriptstyle 1$};
  \node[ext, fill=white](u2) at (2,0) {$\scriptstyle 2$};
  \node[int] (w1) at (-1.5,0) {};
  \node[int] (w2) at (-.7,0) {};
  \draw [-latex] (v1) edge (w1) (v2) edge (w2);
 \end{tikzpicture}
\pm
  \begin{tikzpicture}[baseline=-.65ex, scale=.5]
 \draw (-1.5,0) -- (2.5,0);
  \node[ext] (v1) at (-0.7,1) {$\scriptstyle 1$};
  \node[ext] (v2) at (1,1) {$\scriptstyle 2$};
  \node[ext, fill=white](u1) at (0,0) {$\scriptstyle 1$};
  \node[ext, fill=white](u2) at (2,0) {$\scriptstyle 2$};
  \node[int] (w1) at (-.7,0) {};
  \node[int] (w2) at (1,0) {};
  \draw [-latex] (v1) edge (w1) (v2) edge (w2);
 \end{tikzpicture}
 \pm
   \begin{tikzpicture}[baseline=-.65ex, scale=.5]
 \draw (-1.5,0) -- (3.5,0);
  \node[ext] (v1) at (-0.7,1) {$\scriptstyle 1$};
  \node[ext] (v2) at (3,1) {$\scriptstyle 2$};
  \node[ext, fill=white](u1) at (0,0) {$\scriptstyle 1$};
  \node[ext, fill=white](u2) at (2,0) {$\scriptstyle 2$};
  \node[int] (w1) at (-.7,0) {};
  \node[int] (w2) at (3,0) {};
  \draw [-latex] (v1) edge (w1) (v2) edge (w2);
 \end{tikzpicture}
 \\
 \pm
 \begin{tikzpicture}[baseline=-.65ex, scale=.5]
 \draw (-1.5,0) -- (2.5,0);
  \node[ext] (v1) at (0.7,1) {$\scriptstyle 1$};
  \node[ext] (v2) at (-.7,1) {$\scriptstyle 2$};
  \node[ext, fill=white](u1) at (0,0) {$\scriptstyle 1$};
  \node[ext, fill=white](u2) at (2,0) {$\scriptstyle 2$};
  \node[int] (w1) at (.7,0) {};
  \node[int] (w2) at (-.7,0) {};
  \draw [-latex] (v1) edge (w1) (v2) edge (w2);
 \end{tikzpicture}
 \pm
  \begin{tikzpicture}[baseline=-.65ex, scale=.5]
 \draw (-.5,0) -- (2.5,0);
  \node[ext] (v1) at (1.4,1) {$\scriptstyle 1$};
  \node[ext] (v2) at (.7,1) {$\scriptstyle 2$};
  \node[ext, fill=white](u1) at (0,0) {$\scriptstyle 1$};
  \node[ext, fill=white](u2) at (2,0) {$\scriptstyle 2$};
  \node[int] (w1) at (1.4,0) {};
  \node[int] (w2) at (.7,0) {};
  \draw [-latex] (v1) edge (w1) (v2) edge (w2);
 \end{tikzpicture}
  \pm
  \begin{tikzpicture}[baseline=-.65ex, scale=.5]
 \draw (-.5,0) -- (2.5,0);
  \node[ext] (v1) at (.7,1) {$\scriptstyle 1$};
  \node[ext] (v2) at (1.4,1) {$\scriptstyle 2$};
  \node[ext, fill=white](u1) at (0,0) {$\scriptstyle 1$};
  \node[ext, fill=white](u2) at (2,0) {$\scriptstyle 2$};
  \node[int] (w1) at (.7,0) {};
  \node[int] (w2) at (1.4,0) {};
  \draw [-latex] (v1) edge (w1) (v2) edge (w2);
 \end{tikzpicture}
 \pm
 \begin{tikzpicture}[baseline=-.65ex, scale=.5]
 \draw (-.5,0) -- (3.5,0);
  \node[ext] (v1) at (0.7,1) {$\scriptstyle 1$};
  \node[ext] (v2) at (3,1) {$\scriptstyle 2$};
  \node[ext, fill=white](u1) at (0,0) {$\scriptstyle 1$};
  \node[ext, fill=white](u2) at (2,0) {$\scriptstyle 2$};
  \node[int] (w1) at (.7,0) {};
  \node[int] (w2) at (3,0) {};
  \draw [-latex] (v1) edge (w1) (v2) edge (w2);
 \end{tikzpicture} 
\end{multline*}
(To obtain the signs, one needs to consider the type II vertices as odd objects.)

It follows from the Poincar\'e-Birkhoff-Witt Theorem that the collection of (isomorphism classes of) such Lie decorated graphs in fact forms a basis $\fSGraphs_2$. We are now ready to state our definition of $\fSGraphs_2^{disc}$.
\begin{defi}
 A Lie decorated graph $\Gamma\in \fGraphs_2^{1}(r,s)$ is called externally disconnected if it contains a connected component consisting of internal vertices only, where we count type II vertices in a formal Lie expression as belonging to the same connected component. The subspace  $\fGraphs_2^{1,disc}(r,s)\subset \fGraphs_2^{1}(r,s)$ is defined to be the linear span of the externally disconnected Lie decorated graphs.
\end{defi}

We claim that this definition indeed satisfies our requirements.
\begin{prop}
The sub-collection $\fSGraphs_2^{disc}$ made of $\fGraphs_2^{disc}$ in the first color and the spaces $\fGraphs_2^{1,disc}(r,s)$ in the second is a dg Hopf cooperadic bi-ideal.
Furthermore, the map $\fSGraphs_2\to \Omega_{\PA}(\SFM_2)$ factors through the quotient dg Hopf cooperad
\[
 \SGraphs_2=(\Graphs_n,\KGraphs_n) =\fSGraphs_2^{}/\fSGraphs_2^{disc}
\]
\end{prop}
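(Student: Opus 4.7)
The plan is to reduce both assertions—the bi-ideal property and the factoring of the form map—to a single vanishing statement: for any Lie-decorated graph $\Gamma$ possessing a Lie-connected purely-internal component $C$ (and in particular, for any Lie-connected arity $(0,0)$ graph), the fiber integral $\omega_\Gamma$ (resp.\ the weight $c_\Gamma$ of \eqref{equ:KontsevichWeight}) vanishes. This mirrors the strategy employed for $n\geq 3$, with the novel content concentrated into this vanishing lemma. Once it is established, the proof becomes a structural check essentially identical to the $n\geq 3$ case.

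The bi-ideal check proceeds by going through the three operations. The commutative product, being fusion at external vertices, cannot destroy an internal Lie-connected component, so $\fSGraphs_2^{disc}$ is a graded ideal. The cooperadic cocomposition by subgraph contraction either engulfs the Lie-connected component entirely (in which case the contracted subgraph is itself externally disconnected, so one tensor factor lies in the ideal) or leaves it entirely in the quotient; the Lie-decorated definition is designed precisely so that a formal Lie expression is treated as indivisible and cannot be split by contraction. For the differential, the problematic pieces are the subgraph-contraction terms that carry a coefficient $c_{\Gamma_1}$ or $c_{\Gamma/\Gamma'}$; the vanishing lemma applied to the arity $(0,0)$ setting guarantees that these coefficients are zero whenever the contracted piece would destroy the Lie-disconnectedness. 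The factoring of the form map through the quotient is then immediate from the lemma.

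For the vanishing lemma, split into two cases according to the composition of $C$. If $C$ contains an internal type I vertex, then as in the $n\geq 3$ argument, Fubini's theorem factors out an inner integral over the positions of the vertices of $C$; the integrand is invariant under the full two-dimensional group $\R_{>0}\ltimes\R$ acting by scaling and horizontal translation of these positions, so it cannot be of top degree on the fiber, and the inner integral vanishes. If $C$ is purely type II, then (since edges emanate only from type I vertices) $C$ is edgeless and consists of internal type II vertices arranged in a formal Lie expression. Expanding via the Poincar\'e--Birkhoff--Witt isomorphism writes the corresponding graph as a signed sum over linear orderings of these vertices on the boundary line; the signs are exactly those generated by the antisymmetry of the (shifted) $\Lie_2$-bracket. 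Because the volume of the corresponding boundary stratum in $\FMH_2(0,s')$ is invariant under permutation of the vertices, the signed antisymmetrizing sum cancels.

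The main obstacle is the purely-type-II case of the vanishing lemma. One must verify with care that the Koszul signs generated by the PBW expansion of a shifted $\Lie_2$-bracket match exactly the symmetry of the boundary stratum integrals; conventions for shifted Lie operads are easy to get wrong, and it is this precise compatibility that justified introducing the Lie-decorated basis in the first place. A conceptually clean way to perform this check is to recognize the arity $(0,0)$ weight as a character of the graded-associative algebra generated by placing internal boundary components side-by-side on the line, and to observe that the $\Lie_2$-bracket becomes a graded commutator in this algebra; since the weight takes values in the commutative ring $\R$, every commutator is automatically annihilated.
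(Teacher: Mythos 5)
Your overall architecture matches the paper's (which is itself very terse): reduce everything to a vanishing statement for the weights/fiber integrals of externally disconnected Lie-decorated graphs, handle components containing a type~I vertex by Fubini plus basicness, and leave the algebra/cooperad compatibilities as structural checks. The Fubini case and the structural checks are fine. The problem is the purely-type-II case, which is the only genuinely new content of the $n=2$ situation, and there your argument goes wrong in two related ways.

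First, your vanishing lemma is stated too strongly: ``in particular, for any Lie-connected arity $(0,0)$ graph, $c_\Gamma$ vanishes'' would force \emph{all} Kontsevich weights of connected graphs to be zero, which is false (these weights are the entire content of the formality morphism; e.g.\ the graph with one internal type~I vertex and two edges to two internal type~II vertices has nonzero weight). What is actually needed is vanishing for Lie-\emph{disconnected} arity $(0,0)$ graphs, i.e.\ for products of at least two separate Lie words; after the Fubini reductions the only case not already covered is the product of two singleton type~II vertices. Second, your ``conceptually clean'' commutator argument proves the wrong statement and fails anyway: the internal type~II vertices carry degree $1-n=-1$ and are therefore \emph{odd}, so the shifted bracket is the \emph{symmetric} combination $[x,y]=xy+yx$, which a character into $\R$ does \emph{not} annihilate (one gets $2\chi(x)\chi(y)$); consistently, $[\bullet,\bullet]$ is declared \emph{connected} in the Lie-decorated basis precisely because its weight is nonzero. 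The quantity that must vanish is the weight of the PBW image of the symmetric \emph{product} of the two odd singletons, which is the \emph{antisymmetrized} sum $\tfrac12(\bullet_a\bullet_b-\bullet_b\bullet_a)$; its weight cancels because the integrals over the two ordered strata agree while the Koszul signs alternate. So the correct mechanism is exactly the oddness of the type~II vertices turning the symmetric product into an alternating sum --- not the antisymmetry of the bracket, and not a character killing commutators. Your first formulation (``signed antisymmetrizing sum over orderings cancels against permutation-invariant stratum volumes'') is the right idea, but it is the product, not the bracket, that antisymmetrizes, and the parenthetical claim and the character argument should be deleted.
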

\begin{proof}
 Again we have to check compatibility with the algebra structure, the cooperad structure, and the differential.
 Let us leave the former two points to the reader. For the last step, it suffices to note that under \eqref{equ:KontsevichWeight} all externally disconnected graphs are assigned zero weight: The integral \eqref{equ:integral2} factors again using Fubini's theorem, and by basic-ness under rescaling and translation one factor of the integrand cannot contribute  a top form. A similar argument shows the factorization statement.
\end{proof}

\section{The cohomology of $\fSGraphs_n$}\label{sec:cohom}
In this section we will show that the map 
\[
 \SGraphs_n\to \Omega_{\PA}(\SFM_n)
\]
is a quasi-isomorphism, by computing the cohomology of the left-hand side.
Since the part of the result in the first color is already known (by Theorem \ref{thm:KLV}), we focus on the piece of color 2, i.e., on the maps
\[
 \Graphs_n^1(r,s) \to \Omega_{\PA}(\FM_n^1(r,s)).
\]
We have to consider the cases $n=2$ and $n\geq 3$ separately.
To compute the cohomology of $\Graphs_2^1(r,s)$ we consider a filtration on the number of internal type I vertices. No piece of the differential can create such vertices.
Furthermore, there is only one piece of the differential not destroying internal type I vertices, and that piece is a ``Hochschild type'' differential contracting internal type II vertices, schematically:
\[
 \delta_1\colon 
 \begin{tikzpicture}[baseline=-.65ex]
\draw(0,0) -- (1,0);
\node[int] (v1) at (.2,0) {};
\node[int] (v2) at (.8,0) {};
\draw (v1) edge +(-.3,.3) edge +(0,.3)
      (v2) edge +(.3,.3) edge +(0,.3);
\end{tikzpicture}
\mapsto 
 \begin{tikzpicture}[baseline=-.65ex]
\draw(0,0) -- (1,0);
\node[int] (v1) at (.5,0) {};
\draw (v1) edge +(-.3,.3) edge +(-.1,.3)
      (v1) edge +(.3,.3) edge +(0.1,.3);
\end{tikzpicture}\, .
\]
This is the differential we will see on the first page of the spectral sequence. By standard arguments (cf. \cite[Proposition 74]{graphthings}), we find that the homology of the the complex $(\Graphs_n^1(r,s), \delta_1)$ is spanned by Lie decorated graphs $\Gamma$ such that all internal type II vertices are univalent and in their own Lie cluster, and all external type II vertices are zero valent. (In other words, the corresponding classes of plain graphs are found by antisymmetrizing over the positions of the internal type II vertices.)
The differential on the next page removes exactly one internal type I vertex, together with zero or one edges. Again we may borrow the argument of \cite[Proposition 74]{graphthings} to show that the resulting complex is quasi-isomorphic to $\Graphs_n$ (for each fixed linear ordering of the external type II vertices), and hence the cohomology is identified with $s!$ copies of $\e_n^*(r)$, one for each ordering of the external type II vertices. But this space is exactly the cohomology of $\FM_n^1(r,s)$. It remains to show that the spectral sequence abuts at this point. However, this is immediate since all cohomology classes found may be represented by graphs without internal vertices, which are evidently closed under the full differential.

\medskip

Now let us turn to the case $n\geq 3$. 
We consider a filtration by the total number of internal vertices.
Any piece of the differential reduces the number of internal vertices by at least one, and the piece that reduces the number by exactly one is just the contraction of one edge between an internal type I vertex and another vertex.

We will show by an induction on the arity $(r,s)$ that the cohomology can be identified with $\Poiss_n^*(r)\otimes \Graphs_{n-1}(s)$. More concretely, elements of $\Pois_n^*(r)$ may be represented by components of the graph connecting the $r$ external type I vertices, without involvement of internal vertices.
Elements $\gamma\in \Graphs_{n-1}(s)$ be be represented by replacing each edge by a zigzag of the form
\begin{equation}\label{equ:2zigzag}
 \begin{tikzpicture}
  \draw (-.5,0) -- (1.5, 0);
  \node[ext, fill=white] (v1) at (0,0) {};
  \node[ext, fill=white] (v2) at (1,0) {};
  \node[int] (w) at (.5,.5) {};
  \draw[-latex] (w) edge (v1) edge (v2);
 \end{tikzpicture}
\end{equation}
The procedure is best explained by an example:
\[
\Graphs_{n-1}(3)
\ni
  \begin{tikzpicture}[baseline=-.65ex]
  \node[ext] (v1) at (0:.5) {$\scriptstyle 1$};
  \node[ext] (v2) at (120:.5) {$\scriptstyle 2$};
  \node[ext] (v3) at (240:.5) {$\scriptstyle 3$};
  \node[int] (w) at (0,0) {};
  \draw (w) edge (v1) edge (v2) edge (v3);
 \end{tikzpicture}
 \mapsto
  \begin{tikzpicture}[baseline=-.65ex]
  \draw (-.5,0) -- (3.5, 0);
  \node[ext, fill=white] (v1) at (0,0) {$\scriptstyle 1$};
  \node[ext, fill=white] (v2) at (1,0) {$\scriptstyle 2$};
  \node[ext, fill=white] (v3) at (3,0) {$\scriptstyle 3$};
  \node[int] (w) at (2,0) {};
  \node[int] (u1) at (1.5,.7) {};
  \node[int] (u2) at (1.5,1.4) {};
  \node[int] (u3) at (2.5,.7) {};
  \draw[-latex] (u1) edge (w) edge (v2);
  \draw[-latex] (u2) edge (w) edge (v1);
  \draw[-latex] (u3) edge (w) edge (v3);
 \end{tikzpicture} 
 \in \Graphs_n^1(0,3)
\]
Note again that the order of type II vertices in the last picture is irrelevant since we are in the case $n\geq 3$. We merely draw the horizontal line to distinguish the type II vertices on it from the type I vertices above it. (One should consider the line as an $(n-1)$-plane.)

Now let us prove that the cohomology may be identified with $\e_n^*(r)\otimes \Graphs_{n-1}(s)$.
First we may filter our complex again by the number of non-(internal-bivalent-type I-vertices). The first differential in the associated spectral sequence then removes one internal bivalent type I vertex by contracting an adjacent edge. Similarly to \cite[Appendix K]{grt} the resulting complex factors into subcomplexes, one for each pair of non-bivalent vertices connected by a string of bivalent vertices. Those subcomplexes are acyclic is the string connects an type I and a type II vertex. 
If the string connects two type I vertices the cohomology of the string-complex is one-dimensional, spanned by an edge connecting the two vertices directly, in either direction. If the string connects two type I vertices the cohomology of the string-complex is again-dimensional, connecting the two endpoints by a length 2 zigzag of the form \eqref{equ:2zigzag}.
From this argument one can see that $(\Graphs_n^1(r,s), \delta_1)$ is quasi-isomorphic to its quotient, say $V_{r,s}$, obtained by:
\begin{enumerate}
 \item Equating graphs to zero in which type II vertices are connected to type I vertices other then those in length 2 zigzags \eqref{equ:2zigzag} connecting to another type 2 vertex.
 \item Equating graphs to zero which contain bivalent internal type I vertices, other than those in the length 2 zigzags \eqref{equ:2zigzag} between two external type II vertices.
 \item Equating graphs obtained by changing directions of edges connecting two type I vertices.
\end{enumerate}
Obviously, we may identify 
\[
 V_{r,s} = \Graphs_n(r) \otimes \Graphs_{n-1}(s),
\]
with the differential $\delta_1$ inducing the differential on $\Graphs_n(r)$. From this (and Theorem \ref{thm:KLV}) it follows that 
\[
 H(\Graphs_n^1(r,s), \delta_1) \cong \Poiss_n^*(r)\otimes \Graphs_{n-1}(s).
\]
The next differential $\delta_2$ in the (outer) spectral sequence reduces the number of internal vertices by two. By inspection, we find that this differential is exactly the differential on the factor $\Graphs_{n-1}(s)$, and hence (using Theorem \ref{thm:KLV} again) we find that 
\[
 H( H(\Graphs_n^1(r,s), \delta_1), \delta_2) \cong \Poiss_n^*(r)\otimes \Poiss_{n-1}^*(s).
\]
But the right hand side is exactly $H(\FM_n^1(r,s))$.
Again, we find that the spectral sequence abuts at this stage since all surviving classes are naturally represented by graph cocycles.

This finishes the proof of Theorem \ref{thm:main}.
%TODO: in n>2 case better discuss piece of differential lifting one vertex from n-1-plane
\hfill\qed

\begin{rem}
 We remark that slightly smaller (but quasi-isomorphic) variants of the graphical cooperad $\SGraphs_n$ may be defined. First, for the piece in color 1, i.e., $\Graphs_n$, one may additionally postulate that all graphs with internal vertices of valence $<3$ are zero, as is done in \cite{K2}. The resulting quotient graph cooperad is quasi-isomorphic to our version. 
 Furthermore, one can check that the map $\Graphs_n\to \Omega_{\PA}(\FM_n)$ factors through this quotient.
 
 Similarly, we may define a smaller (but quasi-isomorphic) variant of $\KGraphs_n$ by declaring that graphs with internal vertices of valence $<2$ are zero, as well as graphs with internal vertices with in- and out valence exactly one, i.e., 
 $\begin{tikzpicture}[baseline=-.65ex]
   \node[int] (v) at (0,0) {};
   \draw[-latex] (v) edge +(.5,0);
   \draw[latex-] (v) edge +(-.5,0);
  \end{tikzpicture}=0$.
Again, one can check that the map $\KGraphs_n\to \Omega_{\PA}(\FMH_n)$ factors through this quotient.
\end{rem}

\begin{rem}\label{rem:zeroary}
One may add a nullary cooperation in to the the cooperad $\Graphs_n$ whose coaction creates external vertices of valence zero, call the resulting Hopf cooperad $\uGraphs_n$.
The map $\Graphs_n\to \Omega_{\PA}(\FM_n)$ above then naturally extends to a map $\uGraphs_n\to \Omega_{\PA}(\uFM_n)$, cf. section \ref{sec:compactified} for the notation.
 
Similarly, one may add a nullary cooperation to $\KGraphs_n$ whose coaction creates external type II vertices of valence zero. Calling the resulting two colored Hopf cooperad $\uSGraphs_n=(\uGraphs_n, \uKGraphs_n)$ one can naturally extend the map $\Graphs_n\to \Omega_{\PA}(\FM_n)$ above to a map $\uSGraphs_n\to \Omega_{\PA}(\uSFM_n)$.
 
\end{rem}

\section{An application: Formality morphisms}\label{sec:application} 
In this section we describe an application of the aforementioned results in the field of deformation quantization. Concretely, we sketch a proof that a(ny) $L_\infty$ formality morphism can be extended to a homotopy Gerstenhaber formality morphism.

\subsection{The Kontsevich formality Theorem in deformation quantization}
Let $\Tpoly$ be the Gerstenhaber algebra of multivector fields on $\R^d$ and $\Dpoly$ be the braces algebra of multi differential operators.
The Kontsevich formality Theorem \cite{K1} states that these two objects are quasi-isomorphic as $\hoLie_2$ algebras. Concretely, Kontsevich constructs an $L_\infty$ morphism $\mU$ between the Lie algebra $\Tpoly[1]$ and the Lie algebra $\Dpoly[1]$ given by a sum-of-graphs formula
\begin{equation}\label{equ:stableformality}
\begin{gathered}
\mU_n : S^n (\Tpoly[2]) \to \Dpoly[2] \\
(v_1,\dots,v_n) \mapsto \sum_{\Gamma} c_\Gamma D_\Gamma(v_1, \dots, v_n)
\end{gathered}
\end{equation}
where 
\begin{itemize}
\item The sum is over graphs forming a basis of $\KGraphs_2(0,0)$, with $n$ type I vertices.
\item The coefficients $c_\Gamma$ are defined by the integral formula \eqref{equ:KontsevichWeight}.
\item $D_\Gamma(v_1, \dots, v_n)$ is a multidifferential operator naturally associated to the graph $\Gamma$, cf. \cite{K1} for details. (The precise formula for $D_\Gamma$ will not be important for us.)
\end{itemize}

Note that the formality morphism is completely specified by providing the numbers $c_\Gamma$ in the formula above.
Formality morphisms of the form \eqref{equ:stableformality} have been christened \emph{stable formality morphisms} in \cite{vasilystable}, under the mild additional condition that the linear piece of the $L_\infty$ morphism agrees with the Hochschild-Kostant-Rosenberg map, that is, the coefficients of the graphs 
\[
\Gamma_n = 
\underbrace{
\begin{tikzpicture}[baseline=-.65ex]
 \draw(-1.2,-.5) -- (1.2,-.5);
 \node[int] (v) at (0,.5) {};
 \node[int] (w1) at (-1,-.5) {};
 \node[int] (w2) at (-.6,-.5) {};
 \node at (0,-.2) {$\cdots$};
 \node[int] (w3) at (.6,-.5) {};
 \node[int] (w4) at (1,-.5) {};
 \draw[-latex] (v) edge (w1) edge (w2) edge (w3) edge (w4);
\end{tikzpicture}
}_{n\times}
\]
satisfy $c_{\Gamma_n}=\frac 1 {n!}$.

Let us also introduce the following notion.
\begin{defi}
 A stable formality morphism is called \emph{connected} is the numbers $c_\Gamma$ associated to any disconnected\footnote{in the sense discussed above} Lie decorated graph vanish.
\end{defi}

For example, the Kontsevich stable formality morphism is connected.

%
%
%Let $\ELie$ be the two colored operad governing two $\hoLie_2$ algebras and an $\infty$ morphism between them,
%recall that a stable (Kontsevich) formality morphism is a map of two colored operads
%\[
%....
%\]

\subsection{A model for the operadic bimodule $\lD^1_n(-,0)$}

It has been shown in \cite{graphthings} that the semi-algebraic chains $C(\FMH_2(-,0))$ form an operadic $\Br$-$C(\FM_2)$-bimodule
\[
\Br \aol C(\FMH_2(-,0)) \aor C(\FM_2).
\]
In fact, this operadic bimodule is even an operadic torsor in the sense of \cite{CWtorsors}.
The construction of the operadic bimodule structure in \cite{graphthings} uses only naturally defined operations on the Swiss-Cheese operad $\SFM_2$. Hence, by a similar construction, one can see that our model 
\[
\KGraphsM := \KGraphs_2(-,0)^*
\]
for $C(\FMH_2(-,0))$ also carries a left braces action compatible with the right $\Graphs_2^*$ action, and that all algebraic structures are preserved under the map $C(\FMH_2(-,0))\to \KGraphsM$. In other words, we have a commutative diagram of operads and operadic bimodules (in fact: of operadic torsors)
\[
\begin{tikzcd}[column sep=0.5em]	
\Br \ar{d}{\sim}& \aol & C(\FMH(-,0)) \ar{d}{\sim}& \aor & C(\FM_2)\ar{d}{\sim} \\
\Br & \aol & \KGraphsM & \aor & \Graphs_2^*
\end{tikzcd}
\]

The important point is now that the definition of $\KGraphs_2$ can be repeated, for any connected stable formality morphism $\mU$, which enters the definition in so far that it defines the coefficients \eqref{equ:KontsevichWeight} entering the differential on $\KGraphs_2$. Similar, we may define a variant of the $\Br$-$\Graphs_2^*$ bimodule $\KGraphsM$ for any $\mU$, which we will denote by $\KGraphsM^\mU$.

\subsection{Our result}
Concretely, as an the application of our results we will give a short proof of the following theorem.
\begin{thm}\label{thm:defq}
Suppose $\mU$ is a connected ($L_\infty$-) stable formality morphism. Then one may pick a homotopic stable formality morphism $\mU'$ that can be extended to a stable homotopy Gerstenhaber (or, equivalently homotopy braces) formality morphism. 
\end{thm}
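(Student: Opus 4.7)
The plan is to repackage the existence of the operadic $\Br$-$\Graphs_2^*$-bimodule $\KGraphsM^\mU$, which makes sense for any connected stable formality $\mU$ by the construction of the previous subsection, as a stable homotopy braces formality morphism, at the cost of adjusting $\mU$ by a homotopy of $L_\infty$ morphisms.

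The first step is to set up the following dictionary: a stable homotopy Gerstenhaber (equivalently, homotopy braces) formality morphism extending a given $L_\infty$ morphism is equivalent to the data of an operadic $\Br$-$\Graphs_2^*$-torsor modeling (at the level of cohomology) the $\Br$-$C(\FM_2)$-bimodule $C(\FMH_2(-,0))$, together with its compatible graphical action on the pair $(\Tpoly, \Dpoly)$. The bimodule $\KGraphsM^\mU$ furnishes exactly such a candidate: Theorem \ref{thm:main} identifies its cohomology with that of $C(\FMH_2(-,0))$, while the standard graphical assignment $\Gamma\mapsto D_\Gamma$ supplies the action on $(\Tpoly,\Dpoly)$, compatibly with the left $\Br$-action on $\Dpoly$ and the right action responsible for the Gerstenhaber structure on $\Tpoly$.

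The second step is to extract from $\KGraphsM^\mU$, by restricting to graphs with a single internal type I vertex and no internal type II vertices, an $L_\infty$ formality morphism $\mU'$. Because the coefficients appearing in this restriction are exactly the linear-in-$\mU$ part of the weights defining the differential on $\KGraphs_2^\mU$, the two morphisms $\mU$ and $\mU'$ differ by a coboundary in the $L_\infty$-morphism deformation complex that is controlled by the higher-arity pieces of $\KGraphs_2^\mU$. A standard obstruction-theoretic argument, using the cohomology computation of Section \ref{sec:cohom} to guarantee acyclicity of the relevant subcomplex, then shows that this coboundary is realized by a genuine homotopy of $L_\infty$ morphisms. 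This yields the desired $\mU'\sim \mU$ extending to the full stable homotopy braces datum encoded by $\KGraphsM^{\mU'}$.

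The principal obstacle is the precise formulation of the dictionary in the first step. A stable homotopy braces morphism encodes infinitely many higher graphical operations, and one must verify that all of them are correctly packaged by the operadic torsor structure of $\KGraphsM^\mU$, compatibly with the $\mU$-dependent differential. Once this is established, the homotopy adjustment $\mU\to\mU'$ is comparatively formal, the obstruction to a direct (unmodified) extension of $\mU$ living in a graph-cohomology group whose vanishing is essentially the content of Section \ref{sec:cohom}.
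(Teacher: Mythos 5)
Your overall strategy matches the paper's in outline (use $\KGraphsM^\mU$ as the bridge between $\Br$ and $\Graphs_2^*$, feed in the cohomology computation, adjust $\mU$ by a homotopy at the end), but both of your key steps are left unproven, and the second is argued incorrectly. In the first step you posit a ``dictionary'' asserting that a stable homotopy Gerstenhaber formality morphism is \emph{equivalent} to an operadic $\Br$-$\Graphs_2^*$-torsor with the right cohomology; you then flag the verification of this dictionary as ``the principal obstacle.'' That obstacle is precisely the content of the proof. What is actually needed is: (i) the verification that $\KGraphsM^\mU$ is an operadic torsor, which amounts to showing $H(\KGraphsM^\mU(1))\cong\R$ and that composition with a generating cocycle $\bo\in\KGraphsM^\mU(1)$ induces quasi-isomorphisms $\Br\to\KGraphsM^\mU\leftarrow\Graphs_2^*$ (this is where Theorem \ref{thm:main} enters, and where connectedness of $\mU$ is used); and (ii) the main theorem of \cite{CWtorsors}, which says the triple $\bpm \Br & \KGraphsM^\mU & \Graphs_2^*\epm$ is quasi-isomorphic to the canonical triple $\bpm \op Q&\op Q&\op Q\epm$, combined with cofibrancy of $\EGer$ to \emph{lift} and obtain the map $\EGer\to\bpm \Br & \KGraphsM^\mU & \Graphs_2^*\epm$. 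Without the torsor property and the lifting along a cofibrant replacement, no homotopy Gerstenhaber morphism is produced; a cohomology identification alone does not suffice.

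The second step is where your argument would actually fail. You claim the discrepancy between $\mU$ and the $L_\infty$ part of the constructed extension is a coboundary in a deformation complex whose acyclicity is ``essentially the content of Section \ref{sec:cohom}.'' Section \ref{sec:cohom} computes the cohomology of the graph complexes to be the (highly nonzero) cohomology of the configuration spaces; it does not and cannot give acyclicity of the deformation complex of $L_\infty$ morphisms $\Tpoly\to\Dpoly$, which is well known to be nontrivial (it sees the Grothendieck--Teichm\"uller Lie algebra). The correct argument, and the one the paper uses, is a homotopy-\emph{uniqueness} statement rather than an acyclicity statement: homologically nontrivial maps $\hoLie_2\to\op P$ into any operad quasi-isomorphic to $\Poiss_2$ form a single homotopy class up to the arity-rescaling automorphisms, and likewise for maps $\ELie\to\op C$ into operads quasi-isomorphic to the canonical triple. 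After rescaling and replacing by a homotopic morphism one can therefore arrange that the restriction of $\EGer\to\bigGraphs$ to $\ELie$ agrees with (a morphism obtained by twisting from) $\mU$. Separately, your recipe for reading off $\mU'$ ``from graphs with a single internal type I vertex and no internal type II vertices'' is not where the $L_\infty$ morphism lives: it is encoded by the weights $c_\Gamma$ of arity-$(0,0)$ graphs with $n$ internal type I vertices, i.e., by the restriction of the constructed operad map to $\ELie$.
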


\begin{rem}
 We claim that in fact any stable formality morphism can be changed into a homotopic connected stable formality morphism using \cite{vasilystable}, and hence the above Theorem also holds if we omit the word ``connected''. However, we shall not prove this here.
\end{rem}

The proof is relatively short. 
First, we consider the $\Br$-$\Graphs_2$ bimodule $\KGraphsM^\mU$. It generates a two colored operad 
\[
\bigGraphs := \bpm \Br & \KGraphsM^\mU & \Graphs_2^* \epm
\]
which naturally acts on the colored vector space $\Dpoly\oplus \Tpoly$. 
Let $\ELie$ be the two-colored operad governing two $\hoLie_2$ algebras and an $\infty$-morphism between them, and let similarly $\EGer$ be the two-colored operad governing two $\hoPoiss_2$ algebras and an $\infty$-morphism between them. 
By twisting our stable formality morphism $\mU$ gives a map 
\[
 \ELie \to \bigGraphs
\]
extending the canonical maps $\hoLie_2\to \Br$ and $\hoLie_2\to \Graphs_2^*$.
Our objective is to define a stable homotopy Gerstenhaber formality morphism by extending the above map to a (suitable) map from the operad $\EGer$ into $\bigGraphs$.
This is done by the following arguments:
\begin{enumerate}
\item We know that $H(\KGraphsM^\mU(1))\cong \R$. Pick some cocycle generating the non-trivial cohomology $\bo\in \KGraphsM(1)$.
\item The maps (of dg collections) $\Br \to \KGraphsM^\mU \leftarrow \Graphs_2^*$ obtained by composing with the element $\bo$ are quasi-isomorphisms. Hence we conclude that $\KGraphsM^\mU$ is an operadic $\Br$-$\Graphs_2^*$-torsor in the sense of \cite{CWtorsors}.
\item 
The main result of loc. cit. states that if $\op M$ is an operadic $\op P$-$\op Q$ torsor, then $\op P$ and $\op Q$ are quasi-isomorphic, and furthermore the triple 
\[
\op P \aol \op M \aor \op Q
\] 
is quasi-isomorphic to the "canonical" triple 
\[
\op Q \aol \op Q \aor \op Q.
\] 
It follows further that the two colored operad generated by our triple is quasi-isomorphic to the two colored operad governing two $\op Q$ algebras and a map between them. Furthermore, if we take any cofibrant model of that latter operad, a direct quasi-isomorphism to $\bpm \op P & \op M & \op Q\epm$ may be constructed by lifting.
\item In our setup $\op Q=\Graphs_2^*$ is quasi-isomorphic to the Gerstenhaber operad $\Poiss_2$, cf. Theorem \ref{thm:KLV}. 
The operad $\EGer$ governing to $\hoPoiss_2$ algebras and an $\infty$ morphism between them is a cofibrant replacement for the two colored operad $\bpm \op Q & \op Q & \op Q\epm$. Hence we obtain "for free" from the above generalities a quasi-isomorphism of colored operads
\begin{equation}\label{equ:prestableformality}
\EGer \to \bpm \Br & \KGraphsM^\mU & \Graphs_2^* \epm.
\end{equation}
\end{enumerate}
Having constructed the map \eqref{equ:prestableformality}, we want to verify that it indeed defines a stable formality morphism, possibly after changing to a homotopic morphism.
(I.e., that (i) the induced maps $\hoLie_2 \to \Graphs_2$ and $\hoLie_2\to \Br$ are the standard maps and (ii) the unary component is a deformation of the HKR map.)
The construction of \eqref{equ:prestableformality} is somewhat inexplicit, and we do not desire to unpack the underlying formalism. 
Fortunately, however, the space of homology non-trivial maps $\hoLie_2\to \op P$ for $\op P$ any operad quasi-isomorphic to $\Poiss_2$ is very simple, up to homotopy: There is a one parameter family of homotopy classes of maps, indexed by the rescaling factor applied to the single generator in homology. This scaling freedom is exhausted by the automorphisms of the operad $\op P$ (or any operad) by resealing by arity, i.e., multiplying an arity $r$ operation $x$ by $\lambda^{r-1}$ for $\lambda\in \R_{\neq 0}$.
Hence, by changing the morphism \eqref{equ:prestableformality} to a homotopic one, and by exercising the rescaling freedom, we may always assume that the induced maps $\hoLie_2 \to \Graphs_2$ and $\hoLie_2\to \Br$ are the standard ones. 
By essentially the same argument one checks that a map of two colored operads $\ELie\to \op C$, where $\op C$ is a two colored operad quasi-isomorphic to $\bpm \Pois_2 & \Pois_2 & \Pois_2 \epm$, and which is an inclusion in homology is essentially unique, up to homotopy and rescalings.
Hence we may assume that -possibly after rescaling and passing to a homotopic morphism- we may ensure that the restriction of our morphism \eqref{equ:prestableformality} to $\ELie$ agrees with any given twistable stable formality morphism. For example, one such is obtained by operadic twisting from our given morphism. In particular it follows that the linear piece is a deformation of the HKR morphism, and thus the theorem is shown.
\hfill \qed

\bibliographystyle{plain}
\bibliography{../biblio}

\end{document}